\newtheorem{thm}{Th\'eor\`eme}
\newtheorem{cor}[thm]{Corollaire}
\newtheorem{defi}[thm]{D\'efinition}
\newtheorem{lem}[thm]{Lemme}
\newtheorem{prop}[thm]{Proposition}
\newtheorem{rem}{Remarque}
\newtheorem{conj}{Conjecture}
\DeclareMathOperator{\Pic}{Pic}
\DeclareMathOperator{\BPic}{\mathbf{Pic}}
\DeclareMathOperator{\HomExt}{\mathbf{Hom-ext}}
\DeclareMathOperator{\Hom}{Hom}
\DeclareMathOperator{\Ext}{Ext}
\DeclareMathOperator{\Spec}{Spec}
\DeclareMathOperator{\BoldSpec}{\rm{Spec}}
\DeclareMathOperator{\cl}{cl}
\DeclareMathOperator{\h}{H}
\DeclareMathOperator{\CH}{CH}
\DeclareMathOperator{\Z}{Z}
\DeclareMathOperator{\Gal}{Gal}
\DeclareMathOperator{\id}{id}
\DeclareMathOperator{\Rev}{Rev}
\DeclareMathOperator{\IsCl}{IsCl}
\DeclareMathOperator{\dashab}{-Ab}
\DeclareMathOperator{\SL}{SL}
\DeclareMathOperator{\Div}{Div}
\DeclareMathOperator{\NS}{NS}
\begin{document}
\title{Un critère d'épointage des sections $l$-adiques}
\author{Niels Borne\footnote{Partiellement soutenu par le projet
ANR-10-JCJC 0107.} \and Michel Emsalem}
\maketitle
\bibliographystyle{alpha}

\section{Introduction}
Soit $Y$ une courbe géométriquement connexe sur un corps $k$ de type fini sur $\mathbb Q$. On dispose de la suite exacte courte des groupes fondamentaux étales 
$$( \ast _Y) \quad 1\to  \pi _1 ( Y_{\bar k} , \bar x) \to \pi _1 (Y, \bar x ) \to G_k \to 1 $$
où $ \bar k$ est une clôture algébrique, $G_k=\Gal ( \bar k /k)$ est le groupe de Galois absolu de $k$, et $ \bar x$ est un point géométrique de $Y$. Soient à présent $X$ une courbe propre lisse géométriquement irréductible, et $U$ le complémentaire dans $X$ d'un diviseur $D$ défini sur $k$. On s'intéresse dans cet article à la question suivante, dite d'épointage des sections : 
\medskip

les sections de la suite exacte $(\ast _X)$ se relèvent-elles en des sections de la suite exacte $(\ast _U)$ ? 
\medskip

Une des motivations pour l'étude de ce problème est que la conjecture des section pour $X$ de genre au moins $2$, qui affirme que toute section de $(\ast _X)$ provient d'un point $k$-rationnel de $X$, implique une réponse positive à la question de l'épointage des sections. 

 Réciproquement, il est bien connu des spécialistes qu'une réponse positive à la question de l'épointage des sections permettrait de réduire la conjecture des sections au cas de la droite projective privée d'un nombre fini de points. Pour donner un sens précis à cette assertion, on doit formuler une version de la conjecture des sections pour des courbes éventuellement non propres, où à chaque point à l'infini est associé un \og{}paquet\fg{} de (classes de conjugaison de) sections, qui a le cardinal du continu.

  Comme une référence pour cette réduction de la conjecture de sections semble faire défaut, nous en avons inclus une esquisse assez détaillée (voir \S\ref{part_epointage_prof}), en prenant le parti de remplacer les courbes affines par des orbicourbes qui leur sont \og{}homotopiquement équivalentes\fg{}, mais dont la propreté permet un traitement beaucoup plus uniforme (voir \S\ref{par_orbicourbe}), les \og{}paquets\fg{} mentionnés ci-dessus correspondant à des limites projectives de points rationnels sur ces orbicourbes.
  
  \medskip
  
  Nous nous limiterons à l'étude de la question de l'épointage des sections dans le cadre suivant. Pour tout nombre premier $l$, considérons la suite exacte 
$$( \ast _Y)^{[ab,l]} \quad 1\to  \h_1 (Y_{\bar k } , \mathbb Z_l) \to \pi _1^{[ab,l]} (Y, \bar x ) \to G_k \to 1 $$
où $\pi _1 ^{[ab,l]} (Y , \bar x)$ désigne le quotient de $\pi _1 ( Y , \bar x)$ par le noyau du morphisme naturel $\pi _1 ( Y_{\bar k } , \bar x) \to \h_1 (Y_{\bar k } , \mathbb Z_l)$ du groupe fondamental \og{}géométrique\fg{} vers son plus grand $l$-quotient abélien. Dans ce cadre se pose la question de l'épointage des sections $l$-adiques :
 
\medskip

les sections de la suite exacte $(\ast _X)^{[ab,l]}$ se relèvent-elles en des sections de la suite exacte $(\ast _U)^{[ab,l]}$ ?
\medskip

Ce type de problème a été récemment soulevé par Mohamed Saïdi (voir \cite{saidi_good_2010}). On montrera dans cet article l'énoncé suivant : 

\begin{thm}
  \label{thm_princ}
  Soient $k$ un corps de type fini sur $\mathbb Q$, $G_k$ son groupe de Galois absolu, $X/k$ une courbe propre et lisse, géométriquement connexe, $D\subset X$ un diviseur réduit, $U=X\backslash D$ l'ouvert complémentaire, et $\overline x\in U$ un point géométrique. On fixe une extension galoisienne finie $k'/k$ contenant les corps de définition de tous les points de $D$ et un nombre premier $l$ ne divisant pas $[k':k]$. 
  On note $\BPic_{X/k}$ le schéma de Picard de $X$, et $\BPic_{X,D/k}$ celui de la courbe obtenue à partir de $X$ en pinçant $D$ en un unique point rationnel (voir \S \ref{cond_suffisante_epointage}). On fait les deux hypothèses suivantes :
 \begin{enumerate}
    \item  Il existe un entier naturel $N\geq 1$ premier à $l$ et un point rationnel $p:\Spec k\rightarrow \BPic^N_{X/k}$ tel que $D$ soit inclus dans la fibre du morphisme composé $X\rightarrow \BPic^1_{X/k}\xrightarrow{\times N} \BPic^N_{X/k}$ en $p$,
    \item la classe de $\BPic^1_{X,D}$ appartient au sous-groupe $l$-divisible maximal du groupe $H^1(G_k,\BPic^0_{X,D})$. 
      \end{enumerate}
Alors toute section du morphisme naturel $\pi_1(X,\overline{x})^{[ab,l]} \rightarrow G_k $ se relève en une section du morphisme $\pi_1(U,\overline{x})^{[ab,l]}\rightarrow G_k$.
  \end{thm}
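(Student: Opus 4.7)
Le plan consiste \`a transf\'erer la question au sch\'ema de Picard $\BPic_{X,D}$ de la courbe pinc\'ee $\tilde X$, obtenue en identifiant $D$ en un unique point rationnel, et \`a exploiter l'identification naturelle $\pi_1(U,\overline x)^{[ab,l]}\cong \pi_1(\tilde X,\overline x)^{[ab,l]}$. On commence par identifier le noyau $K$ de la surjection naturelle $\pi_1(U,\overline x)^{[ab,l]}\twoheadrightarrow \pi_1(X,\overline x)^{[ab,l]}$ : engendr\'e par les lacets autour des points de $D$, il s'identifie comme $G_k$-module au module de Tate $l$-adique $T_l(T)$ de la partie torique $T$ figurant dans la suite exacte $1\to T\to \BPic^0_{X,D}\to \BPic^0_X\to 1$. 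L'obstruction au rel\`evement d'une section $s:G_k\to \pi_1(X,\overline x)^{[ab,l]}$ est alors la classe, dans $H^2(G_k,T_l(T))$, de l'extension obtenue en tirant en arri\`ere $\pi_1(U,\overline x)^{[ab,l]}$ le long de $s$.

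En utilisant la suite exacte longue de cohomologie galoisienne associ\'ee \`a $0\to T_l(T)\to T_l(\BPic^0_{X,D})\to T_l(\BPic^0_X)\to 0$, on ram\`ene la question \`a deux conditions: (i) l'annulation de la classe d'extension $\alpha_{\tilde X}\in H^2(G_k,T_l(\BPic^0_{X,D}))$ de $(\ast_{\tilde X})^{[ab,l]}$, qui assure l'existence d'au moins un rel\`evement; (ii) la surjectivit\'e de la fl\`eche naturelle $H^1(G_k,T_l(\BPic^0_{X,D}))\to H^1(G_k,T_l(\BPic^0_X))$, qui garantit que \emph{tous} les rel\`evements sont atteints (les sections formant un torseur sous ces groupes $H^1$). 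Pour (i), la suite exacte de Kummer pour $\BPic^0_{X,D}$ identifie $\alpha_{\tilde X}$ \`a l'image de $[\BPic^1_{X,D}]$ sous le bord $H^1(G_k,\BPic^0_{X,D})\to H^2(G_k,T_l(\BPic^0_{X,D}))$, dont le noyau contient $l^n\,H^1(G_k,\BPic^0_{X,D})$ pour tout $n$; la $l$-divisibilit\'e impos\'ee par l'hypoth\`ese 2 entra\^ine donc l'annulation \`a la limite.

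Pour (ii), l'hypoth\`ese 1 interviendrait afin de trivialiser $l$-adiquement la classe d'extension de modules de Tate dans $\Ext^1_{G_k}(T_l(\BPic^0_X),T_l(T))$. Cette classe est essentiellement contr\^ol\'ee par l'application $d\mapsto [d]$ de $D$ dans $\BPic^0_X$ modulo constantes; l'existence d'un $p\in \BPic^N_{X/k}(k)$ dont la fibre contient $D$ signifie exactement que $d\mapsto N[d]$ est constante, autrement dit que $[d]$ est constante modulo $\BPic^0_X[N]$. Comme $\gcd(N,l)=1$, la $N$-torsion est invisible apr\`es tensorisation avec $\mathbb Z_l$, ce qui scinde la suite exacte de modules de Tate et annule la fl\`eche de bord $H^1(G_k,T_l(\BPic^0_X))\to H^2(G_k,T_l(T))$.

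La principale difficult\'e technique sera d'\'etablir pr\'ecis\'ement la correspondance entre l'extension group-th\'eorique $1\to T_l(T)\to \pi_1(U,\overline x)^{[ab,l]}\to \pi_1(X,\overline x)^{[ab,l]}\to 1$ et les extensions kummeriennes issues de la suite $1\to T\to \BPic^0_{X,D}\to \BPic^0_X\to 1$, \`a savoir la version relative, adapt\'ee \`a la courbe singuli\`ere pinc\'ee $\tilde X$, de l'identification classique entre groupe fondamental \'etale $l$-adique et module de Tate du sch\'ema de Picard. Une fois cette compatibilit\'e soigneusement justifi\'ee (en particulier via le passage de $U$ \`a $\tilde X$ au niveau des groupes fondamentaux $l$-ab\'elianis\'es), la d\'eduction finale devient une chasse au diagramme dans la suite longue des modules de Tate combinant les deux hypoth\`eses.
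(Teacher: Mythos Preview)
Your overall strategy coincides with the paper's: reduce the lifting problem to (i) the existence of \emph{some} section of $\pi_1(U,\overline x)^{[ab,l]}\to G_k$, obtained from hypothesis~2 via the Kummer boundary for $\BPic^0_{X,D}$ (the paper quotes Harari--Szamuely for this), together with (ii) the surjectivity of $\h^1(G_k,\h_1(U_{\overline k},\mathbb Z_l))\to \h^1(G_k,\h_1(X_{\overline k},\mathbb Z_l))$, which you deduce, as does the paper, from a $G_k$-equivariant splitting of $0\to T_l(T)\to T_l(\BPic^0_{X,D})\to T_l(\BPic^0_X)\to 0$ (equivalently, of $\h_1(U_{\overline k},\mathbb Z_l)\twoheadrightarrow \h_1(X_{\overline k},\mathbb Z_l)$). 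The paper packages step (ii) differently: it passes to cohomology, expresses the extension class via the Abel--Jacobi map $\cl'$, and invokes Jannsen's injectivity theorem to obtain an \emph{if and only if} criterion (its Th\'eor\`eme~\ref{theoreme-caracterisation-fermes-engendrant-motis-purs}). For the implication actually needed in Th\'eor\`eme~\ref{thm_princ} your direct argument --- the differences $[d]-[d']$ are $N$-torsion, hence die in $\mathbb Z_l$ --- is perfectly adequate and more economical; Jannsen's injectivity is only needed for the converse.

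There is, however, one genuine omission in your sketch of (ii). The condition ``$[d]$ constante modulo $\BPic^0_X[N]$'' is a statement about the $\overline k$-points (or $k'$-points) of $D$; it yields a splitting of the Tate-module sequence that is a priori only $G_{k'}$-equivariant, since the quasi-section $A_{k'}\to G_{k'}$ with composite $[N]$ is constructed over $k'$, where $T$ splits and the points of $D$ are rational. To descend to a $G_k$-equivariant splitting you must use the hypothesis $l\nmid[k':k]$ together with an averaging/restriction--corestriction argument (the paper isolates this as Proposition~\ref{scinde-apres-extension}). Without this step, your claim ``ce qui scinde la suite exacte de modules de Tate'' is not justified over $k$, and the hypothesis $l\nmid [k':k]$ is never used. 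Once you insert this descent, your argument is complete and essentially equivalent to the paper's, modulo the cohomology/Tate-module dictionary you rightly flag as the main technical verification.
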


  Voici un aperçu de la preuve du Théorème \ref{thm_princ}. La deuxième condition du théorème équivaut, d'après \cite{harari-szamuely;Galois-sections-abelianized}, à l'existence d'une section du morphisme $\pi_1^{[ab,l]}(U,\overline x) \rightarrow G_k$. Le problème revient alors à montrer la surjectivité de l'application $\h^1(G_k,\h_1(U_{\overline{k}},\mathbb Z_l))\rightarrow \h^1(G_k,\h_1(X_{\overline{k}},\mathbb Z_l))$. Une condition suffisante --- beaucoup plus forte a priori que la condition d'épointage des sections $l$-adiques --- est, bien entendu, que $\h_1(U_{\overline{k}},\mathbb Z_l)$, qui est a priori une extension de $\h_1(X_{\overline{k}},\mathbb Z_l)$ par un module de Tate qu'il est facile de décrire en fonction des pointes, soit en fait une somme directe de ces deux représentations galoisiennes pures : on dira que l'homologie de $U$ est pure. Nous utilisons alors, en le généralisant un peu, un théorème d'Uwe Jannsen (voir Théorème \ref{theoreme-caracterisation-fermes-engendrant-motis-purs}), pour prouver le théorème suivant :
  
  \begin{thm} Supposons que tous les points de $D$ soient rationnels sur une extension galoisienne finie $k'$ de $k$ de degré premier à $l$. L'épimorphisme $$\h_1 ( U_ {\bar k}, \mathbb Z_l) \twoheadrightarrow \h_1(X_{\bar k } , \mathbb Z_l)$$ admet une section $G_k$-invariante si et seulement si géométriquement (c'est-à-dire sur $\overline{k}$), pour tous $z,z'$ dans le support de $D$, le diviseur $z'-z$ est de torsion première à $l$ dans la jacobienne de $X$.
  \end{thm}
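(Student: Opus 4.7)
Le plan consiste à exploiter l'extension naturelle de $G_k$-modules
$$0 \to M \to \h_1(U_{\bar k}, \mathbb Z_l) \to \h_1(X_{\bar k}, \mathbb Z_l) \to 0,$$
où $M$ est le noyau d'augmentation du $\mathbb Z_l$-module de permutation sur les points géométriques de $D$, tordu par $\mathbb Z_l(1)$. Les termes étant $\mathbb Z_l$-libres, cette extension est déjà $\mathbb Z_l$-scindée, et l'existence d'une section $G_k$-invariante revient à l'annulation de sa classe dans $H^1(G_k, \Hom_{\mathbb Z_l}(\h_1(X_{\bar k}, \mathbb Z_l), M))$. Je commencerais par me ramener au cas $k = k'$ : puisque $[k':k]$ est premier à $l$ et que la classe d'extension vit dans un $\mathbb Z_l$-module, le morphisme de restriction est injectif sur ce groupe, et réciproquement toute section $G_{k'}$-invariante peut être moyennée par $\Gal(k'/k)$ (ce qui est licite, $[k':k]$ étant inversible dans $\mathbb Z_l$) pour en fournir une qui soit $G_k$-invariante.

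Dans ce cadre réduit, les points de $D$ sont $k$-rationnels ; après choix d'un point base $z_0 \in D$, le module $M$ se décompose canoniquement en $\bigoplus_{z \ne z_0} \mathbb Z_l(1)$ et la classe d'extension se scinde en $|D|-1$ classes dans $H^1(G_k, T_l \BPic^0_{X/k})$, via l'identification $\Hom_{\mathbb Z_l}(\h_1(X_{\bar k}, \mathbb Z_l), \mathbb Z_l(1)) \simeq T_l \BPic^0_{X/k}$ donnée par l'autodualité de Weil. L'étape principale consiste à invoquer (une généralisation légère de) le théorème de Jannsen \ref{theoreme-caracterisation-fermes-engendrant-motis-purs} pour identifier chacune de ces $|D|-1$ classes à l'image $\kappa(z - z_0)$ du diviseur de degré zéro $z - z_0 \in \BPic^0_{X/k}(k)$ par l'application de Kummer $\BPic^0_{X/k}(k) \to H^1(G_k, T_l \BPic^0_{X/k})$. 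C'est ce dictionnaire qui constituera le point technique principal : il faudra vérifier que la version généralisée de Jannsen s'applique bien à notre situation (diviseur réduit sur une courbe, coefficients $\mathbb Z_l$) et traduit la pureté de l'homologie en termes des classes des pointes dans la jacobienne.

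Pour conclure, je ferais appel au théorème de Mordell-Weil-Néron (valable puisque $k$ est de type fini sur $\mathbb Q$), qui garantit que $\BPic^0_{X/k}(k)$ est un groupe abélien de type fini. Pour un tel groupe $A$, le noyau de l'application de Kummer $A \to H^1(G_k, T_l A)$ coïncide avec $\bigcap_n l^n A$, lui-même égal au sous-groupe de torsion d'ordre premier à $l$ de $A$. Ainsi $\kappa(z - z_0) = 0$ si et seulement si $z - z_0$ est de torsion d'ordre premier à $l$ dans $\BPic^0_{X/k}(k) \subset \BPic^0_{X/k}(\bar k)$, l'ordre d'un élément étant préservé par extension des scalaires. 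L'équivalence cherchée en découle, la symétrie en $z, z'$ résultant de l'indépendance du sous-groupe de torsion d'ordre premier à $l$ vis-à-vis du choix du point base $z_0$.
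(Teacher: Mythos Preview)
Your approach is essentially the paper's own: reduce to the totally split case by the averaging argument (this is Proposition~\ref{scinde-apres-extension}), decompose the extension class along the points of $D$ (this is Lemma~\ref{reduction_deux_points}, phrased dually in cohomology), identify each component with the Abel-Jacobi/Kummer image of $z-z_0$, and conclude via Mordell-Weil that the kernel of the Kummer map is exactly the prime-to-$l$ torsion (this is the content of Th\'eor\`eme~\ref{jannsen-injectivite-abel-jacobi}). The paper merely dualises first to $\h^1(\cdot,\mathbb Z_l(1))$, which makes the Thom--Gysin description of the quotient and the link to $\cl'$ a bit more transparent, but the substance is identical.

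One point to tighten: you invoke ``le th\'eor\`eme de Jannsen~\ref{theoreme-caracterisation-fermes-engendrant-motis-purs}'' to identify the extension components with the Kummer classes $\kappa(z-z_0)$, but Th\'eor\`eme~\ref{theoreme-caracterisation-fermes-engendrant-motis-purs} is precisely the reformulation of the statement you are proving, so citing it here is circular. What you actually need is Lemme~\ref{lemme-Jannsen} (the identification $\cl'=\delta$), together with the observation that your homological extension is $\mathbb Z_l(1)$-dual to the relative-cohomology extension whose class is $\cl'(z-z_0)$. A second minor inaccuracy: the kernel $M$ is the \emph{cokernel of the diagonal} $\mathbb Z_l(1)\to\bigoplus_{z}\mathbb Z_l(1)$ (generated by inertia with the single relation $\sum I_z=0$), not the augmentation kernel; the two differ as $G_k$-modules in the non-split case, though this is harmless once you have passed to $k'$.
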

  
  La condition donnée par ce théorème pour que l'homologie soit pure sera reformulée en la première condition du Théorème \ref{thm_princ}. 
  \medskip
  
  Nous rappelons dans le paragraphe \ref{section-abel-jacobi} la construction de l'application d'Abel-Jacobi, qui à une classe de cycle cohomologue à $0$ sur $X$ associe une extension de représentations galoisiennes. Le lemme clé (Lemme \ref{lemme-Jannsen}) identifie cette application à un cobord obtenu à partir de la suite de Kummer pour la jacobienne de $X$, dont on sait qu'il est injectif. Comme il semble qu'une preuve n'ait jamais été publiée, nous en avons inclus une. Celle-ci utilise un entrelacement entre la suite de Kummer et la suite de cohomologie relative qui fait écho, de manière plus complexe, à celui qu'on utilise pour définir la classe de cycle d'un diviseur en cohomologie étale.

  Enfin, dans le paragraphe \ref{exemples_homologie_pure}, nous donnons des exemples explicites de courbes modulaires épointées dont l'homologie est pure, et pour lesquelles le Théorème \ref{thm_princ} s'applique.

  \subsection{Remerciements}

  Nous remercions Damian Rössler pour nous avoir guidés vers le théorème de Manin-Mumford, et Pierre Parent pour nous avoir fourni les exemples de courbes modulaires. Nous sommes également reconnaissants envers Jakob Stix dont les commentaires éclairés ont permis d'améliorer notre texte initial. Enfin, nous remercions le rapporteur pour sa lecture attentive ainsi que pour ses remarques.

\section{La conjecture des sections}
  
\label{par_conjecture_section}

 \subsection{Énoncé}

Soit $X/k$ un schéma de type fini,
géométriquement connexe, et $\overline x:\Spec  \Omega\rightarrow X$ un point géométrique. On dispose de la \og suite exacte fondamentale\fg{}:

\begin{equation}
  \label{suite_exacte_fondamentale}
  1 \rightarrow \pi_1(X_{\overline k},\overline x)\rightarrow  \pi_1(X,\overline x) \rightarrow G_k\rightarrow 1
\end{equation}
voir \cite{grothendieck_revetements-etales-groupe-fondamental}, IX, Théorème 6.1. Par fonctorialité, un point rationnel induit une section de cette suite exacte, bien définie à conjugaison près par un élément de  $\pi_1(X_{\overline k},\overline x)$. On obtient ainsi une application~:

\[s_X: X(k) \rightarrow \HomExt_{G_k}(G_k,\pi_1(X_{\overline k},\overline x))  \]

La conjecture des sections s'énonce ainsi :

\begin{conj}[\cite{grothendieck_brief-faltings}]
  \label{conjecture_section}
Si $X$ est une courbe propre et lisse de genre supérieur ou égal à $2$ sur un corps $k$ de type fini sur $\mathbb Q$, l'application $s_X$ est bijective. \end{conj}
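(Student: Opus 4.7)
\medskip

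La conjecture des sections étant un problème ouvert majeur depuis plus de trente ans, ce qui suit ne peut être qu'une esquisse stratégique ; je séparerais naturellement les deux directions de la bijectivité.

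Pour l'injectivité de $s_X$, je procéderais par localisation. Étant donnés deux points $x,y\in X(k)$ induisant des sections conjuguées, quitte à agrandir $k$, il existe une place $v$ telle que $X_{k_v}$ ait bonne réduction, et la fonctorialité des sections associées ramène la question à l'injectivité $p$-adique. Celle-ci relèverait du théorème de Mochizuki sur la reconstruction anabélienne des points à partir des sections sur les corps $p$-adiques, lui-même appuyé sur la théorie de Hodge $p$-adique et la rigidité des représentations galoisiennes cristallines.

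Pour la surjectivité, qui est l'obstacle central et véritablement difficile, je suivrais le programme de réduction esquissé par les auteurs en \S\ref{part_epointage_prof} et au Théorème \ref{thm_princ}. L'idée serait, à partir d'une section $s$ de $(\ast_X)$, de choisir un morphisme fini $\varphi:X\to\mathbb P^1$ convenable, d'appliquer un théorème d'épointage pour relever la restriction de $s$ à $X\backslash\varphi^{-1}(S)$ en une section de la suite analogue, puis de descendre via $\varphi$ afin de se ramener à la conjecture pour $\mathbb P^1_k$ privé d'un nombre fini de points $S$. Le point bloquant reste précisément ce dernier cas : le passage d'une section galoisienne abstraite de $\pi_1(\mathbb P^1\backslash S,\bar x)\to G_k$ à un point rationnel effectif semble échapper à toute technique connue, et c'est là que réside le c\oe ur de la difficulté de la conjecture.
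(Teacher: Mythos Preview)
L'\'enonc\'e est une \emph{conjecture}, pas un th\'eor\`eme, et l'article ne pr\'etend nullement la d\'emontrer. Imm\'ediatement apr\`es l'\'enonc\'e, il se contente de signaler que l'injectivit\'e est connue et d\'ecoule du th\'eor\`eme de Mordell--Weil (avec renvoi \`a \cite{stix_cuspidal}, Appendix~B).

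Pour l'injectivit\'e, ta route est beaucoup plus lourde que n\'ecessaire. L'argument standard est enti\`erement global et \'el\'ementaire : si $x,y\in X(k)$ induisent des sections conjugu\'ees, alors d\'ej\`a leurs images par l'application de section ab\'elianis\'ee co\"incident, ce qui force la classe de $x-y$ dans $\BPic^0_{X/k}(k)$ \`a appartenir au sous-groupe divisible maximal ; par Mordell--Weil ce groupe est de type fini, donc sans \'el\'ement divisible non nul, d'o\`u $x=y$ dans la jacobienne et donc dans $X$ puisque le genre est $\geq 2$. Ton d\'etour par une place $p$-adique et la reconstruction anab\'elienne de Mochizuki n'est pas faux en principe --- l'injectivit\'e vaut bien sur les corps $p$-adiques --- mais il remplace deux lignes par une machinerie profonde de th\'eorie de Hodge $p$-adique, et la clause \og{}quitte \`a agrandir $k$\fg{} est superflue et potentiellement source de confusion.

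Pour la surjectivit\'e, tu identifies correctement le point comme ouvert et tu d\'ecris la strat\'egie de r\'eduction \`a $\mathbb P^1$ priv\'e de points via l'\'epointage, en accord avec l'esquisse donn\'ee en \S\ref{part_epointage_prof}. Mais il s'agit d'un programme, pas d'une preuve : la conjecture d'\'epointage elle-m\^eme est ouverte (le pr\'esent article ne traite qu'une variante $l$-adique ab\'elianis\'ee sous des hypoth\`eses restrictives, Th\'eor\`eme~\ref{thm_princ}), et la conjecture des sections pour $\mathbb P^1$ priv\'e de points l'est tout autant. Rien ici ne constitue donc une d\'emonstration de la conjecture, et l'article n'en revendique aucune.
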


L'injectivité est connue et est une conséquence du théorème de  Mordell-Weil (voir par exemple \cite{stix_cuspidal}, Appendix B).

\subsection{Orbicourbes}
\label{par_orbicourbe}
Par orbifolde, on entendra un champ de Deligne-Mumford, qui est génériquement un schéma.
Une orbicourbe est une orbifolde réduite de type fini modérée sur un corps, qui est de dimension $1$. 
Les orbicourbes peuvent être construites par recollement, mais on dispose aussi d'une description agréable de leurs foncteurs des points, due à Vistoli, qu'on rappelle brièvement.

\subsubsection{Champ des racines}

On rappelle que le champ torique $[\mathbb A^1|\mathbb G_m]$ est isomorphe au champ classifiant les couples $(\mathcal L,s)$, où $\mathcal L$ est un faisceau inversible sur $X$ et $s$ est une section de ce faisceau. 

\begin{defi}[\cite{vistoli_gromov-witten}]  
  \begin{enumerate}
    \item 
  Soit un schéma $X$, muni d'un couple $(\mathcal L,s)$, où $\mathcal L$ est un faisceau inversible sur $X$ et $s$ est une section de ce faisceau. On appelle champ des racines $r$-ièmes de $(\mathcal L,s)$ le champ

$$\sqrt[r]{(\mathcal L,s)/X}=X\times_{[\mathbb A^1|\mathbb G_m]} \mathcal [\mathbb A^1|\mathbb G_m]$$

où le produit fibré est pris par rapport aux morphismes  $(\mathcal L,s): X\rightarrow \mathcal [\mathbb A^1|\mathbb G_m]$, et l'élévation à la puissance $r$ : $\cdot^{\otimes r}:\mathcal [\mathbb A^1|\mathbb G_m]\rightarrow \mathcal [\mathbb A^1|\mathbb G_m]$. 

\item Si $D$ est un diviseur de Cartier effectif sur $X$, et $ s_D$ est la section canonique de $\mathcal O_X(D)$, on note $\sqrt[r]{D/X}$ le champ $\sqrt[r]{(\mathcal O_X(D),s_D)/X}$.

\item Soit $I$ un ensemble fini, $\mathbf r=(r_i)_{i\in I}$ une famille d'entiers $r_i\geq 1$, et $\bold D=(D_i)_{i\in I}$ une famille de diviseurs de Cartier effectifs sur $X$. 
On note  $\sqrt[\bold r]{\bold D/X}$ le champ $\prod_{X,i\in I}\sqrt[r_i]{D_i/X}$. On appellera $r_i$ la multiplicité de $D_i$.

\end{enumerate}
\end{defi}

\begin{rem}
  En dimension $1$, ou plus généralement lorsque l'on considère un schéma muni d'un diviseur à croisements normaux simples, les champs des racines définis ci-dessus sont suffisants. Pour un diviseur à croisements normaux généraux, il faudrait les remplacer par les champs des racines généralisés considérés dans \cite{borne-vistoli_parabolic-log}.
\end{rem}

\subsubsection{Groupe fondamental des champs de Deligne-Mumford}

Soit $\mathcal X$ un champ de Deligne-Mumford connexe. On se fixe un point géométrique $\overline{x}$. Le groupe fondamental (étale) basé en $\overline{x}$ été défini et étudié dans \cite{noohi_fundamental-group-algebraic} et \cite{zoonekynd_van-Kampen}. Sa définition repose sur le fait que la catégorie $\Rev \mathcal X$ 
des morphismes $\mathcal Y \rightarrow \mathcal X$ d'un champ de Deligne-Mumford vers $ \mathcal X$ qui sont \emph{représentables} finis et étales est (équivalente à) une catégorie galoisienne.

Si on suppose $\mathcal X$ de type fini et géométriquement connexe sur un corps $k$, on dispose de la \og suite exacte fondamentale \fg{} \eqref{suite_exacte_fondamentale} ; la démonstration de l'exactitude est la même que dans le cas des schémas, voir \cite{grothendieck_revetements-etales-groupe-fondamental}, IX, Théorème 6.1. Comme deux points rationnels isomorphes donnent lieu à des sections conjuguées, on dispose d'une application~:

\[s_{\mathcal X}: \IsCl\mathcal X(k) \rightarrow \HomExt_{G_k}(G_k,\pi_1(\mathcal X_{\overline k},\overline x))  \]
des classes d'isomorphisme de points $k$-rationnels vers les sections de la suite exacte fondamentale à conjugaison près.

\subsubsection{Orbicourbes hyperboliques}
 
\begin{defi}\label{def-orbicourbe}
  \begin{enumerate}
	  \item  On appelle \emph{orbicourbe}\footnote{Pour l'équivalence avec la définition précédente, sur un corps algébriquement clos, voir \cite{poma_etale_2010}, Proposition 3.1. Nous pensons que l'équivalence est encore vraie sur un corps quelconque, mais nous nous contenterons d'utiliser cette dernière définition, qui est la plus pratique.}(sous-entendu par la suite : propre et lisse) sur un corps $k$ un champ des racines $\mathcal X =\sqrt[\bold r]{\bold D/X}$, où $X$ est une courbe propre et lisse sur $k$, $(D_i)_{i\in I}$ est une famille finie de diviseurs de Cartier effectifs et réduits sur $X$, $\mathbf r=(r_i)_{i\in I}$ une famille d'entiers $r_i\geq 1$, premiers à la caractéristique de $k$. Le genre de $\mathcal X$ est celui de $X$.
    \item Si $\mathcal X$ est une orbicourbe de genre $g$ sur $k$, on définit sa \emph{\og caractéristique d'Euler-Poincaré orbifolde \fg{}} par~: 
\[ \chi^{orb}(\mathcal X)= 2-2g +\sum_{i\in I} \deg D_i \frac{1-r_i}{r_i} \]
\item On dit alors que l'orbicourbe  $\mathcal X$ est \emph{hyperbolique} si $\chi^{orb}(\mathcal X)<0$.
\end{enumerate}
\end{defi}

 \begin{rem}
   \
   \begin{enumerate}
     \item $\mathcal X$ est hyperbolique si et seulement si $\mathcal X_{\overline k}$ l'est. Lorsque $k=\mathbb C$, l'orbicourbe  $\mathcal X$ est hyperbolique si et seulement si son revêtement universel est le demi-plan de Poincaré : voir \cite{noohi_uniformization}, \S 5.
      \item Si $\mathcal Y\rightarrow \mathcal X$ est un revêtement (étale) de degré $d$, alors il résulte de la formule de Hurwitz que  $\chi^{orb}(\mathcal Y)=d\chi^{orb}(\mathcal X)$, en particulier $\mathcal Y$ est hyperbolique si et seulement si $\mathcal X$ l'est.    \end{enumerate}
 \end{rem}

\begin{conj}  \label{orbi_conjecture_section}
  Si $\mathcal X$ est une orbicourbe hyperbolique géométriquement connexe sur un corps $k$ de type fini sur $\mathbb Q$, l'application $s_{\mathcal X}$ est bijective. \end{conj}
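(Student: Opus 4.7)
The plan is to reduce Conjecture \ref{orbi_conjecture_section} to the classical section conjecture (Conjecture \ref{conjecture_section}) via finite étale Galois covers of $\mathcal{X}$ by schemes. The structural input is that every hyperbolic orbicurve $\mathcal{X}/k$ admits a finite étale Galois cover $\mathcal{Y} \to \mathcal{X}$ defined over $k$ with $\mathcal{Y}$ a smooth proper curve: one takes a characteristic, torsion-free, finite-index subgroup of $\pi_1(\mathcal{X}_{\overline{k}})$ (Selberg-type argument, available since $\pi_1(\mathcal{X}_{\overline{k}})$ is residually finite with prescribed torsion at the stacky points), which is automatically $G_k$-stable and thus descends to $k$. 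By multiplicativity of $\chi^{orb}$ (Remarque after Définition \ref{def-orbicourbe}), $\mathcal{Y}$ has genus $g_{\mathcal{Y}} \geq 2$.

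Granting this, injectivity of $s_{\mathcal{X}}$ reduces to the classical case. If $\xi_1, \xi_2 \in \IsCl\mathcal{X}(k)$ give the same section class, lift them, after a finite extension $k'/k$ large enough to represent both as $\mathcal{Y}$-valued points, to $y_1, y_2 \in \mathcal{Y}(k')$ whose section classes are conjugate. Applying the classical Mordell-Weil-based injectivity result for $\mathcal{Y}$ (see \cite{stix_cuspidal}, Appendix B) gives $y_1 = y_2$, and $G_k$-equivariance of $\mathcal{Y} \to \mathcal{X}$ combined with the description of orbifold automorphisms at stacky points yields $\xi_1 = \xi_2$.

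For surjectivity I would use a twisting argument. Given $s : G_k \to \pi_1(\mathcal{X}, \overline{x})$, the normality and $G_k$-stability of $\pi_1(\mathcal{Y}_{\overline{k}}) \subset \pi_1(\mathcal{X}, \overline{x})$ make $\pi_1(\mathcal{Y}_{\overline{k}}) \cdot s(G_k)$ an open subgroup of finite index, corresponding to a finite étale cover ${}^s\mathcal{Y} \to \mathcal{X}$ over $k$. Since representability, smoothness, properness, and genus are geometric invariants preserved under such twists, ${}^s\mathcal{Y}$ is again a smooth proper curve of genus $\geq 2$, and by construction $s$ lifts tautologically to a section $G_k \to \pi_1({}^s\mathcal{Y}, \overline{y})$. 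Conjecture \ref{conjecture_section} applied to ${}^s\mathcal{Y}$ then produces a point $y \in {}^s\mathcal{Y}(k)$, whose image in $\mathcal{X}$ is the sought-after $\xi \in \IsCl\mathcal{X}(k)$ with $s_{\mathcal{X}}(\xi) = s$ (up to conjugacy by $\pi_1(\mathcal{Y}_{\overline{k}})$).

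The main obstacle is that this is a \emph{reduction} and not an unconditional proof: Conjecture \ref{orbi_conjecture_section} becomes contingent on Conjecture \ref{conjecture_section}. In fact, the paper's épointage framework (Théorème \ref{thm_princ} and \S\ref{part_epointage_prof}) indicates a converse reduction, so that the orbifold conjecture for orbicurves whose coarse space is $\mathbb{P}^1$ is essentially equivalent to the classical conjecture for $\mathbb{P}^1$ minus finitely many points: the genuinely new content of Conjecture \ref{orbi_conjecture_section} lies precisely where Conjecture \ref{conjecture_section} is least understood. On the technical side, one must carefully verify that the twist ${}^s\mathcal{Y}$ preserves the scheme structure of $\mathcal{Y}$ --- i.e.\ that representability survives twisting by a cocycle valued in $\Aut(\mathcal{Y}/\mathcal{X})$ --- which is plausible since the automorphism group acts without stabilizers on $\mathcal{Y}$ above the stacky points, but deserves a separate verification in the Deligne-Mumford setting.
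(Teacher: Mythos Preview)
This statement is a \emph{conjecture}, not a theorem; the paper offers no proof and explicitly flags it as folkloric (\og{}Cette conjecture a un caract\`ere folklorique\fg{}). There is therefore no argument in the paper to compare your proposal against.

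What you have written is, as you yourself acknowledge, a reduction of Conjecture~\ref{orbi_conjecture_section} to Conjecture~\ref{conjecture_section}, not a proof. The surjectivity half via twisting is the standard d\'evissage (the paper cites \cite{stix_evidence_2011}, Chapter~13, for exactly this step in the scheme case). It is worth noting that the paper runs the reduction in the \emph{opposite} direction: \S\ref{part_epointage_prof} explains how Conjecture~\ref{conjecture_section} would follow from Conjecture~\ref{conjecture_epointage} together with Conjecture~\ref{orbi_conjecture_section} restricted to orbicurves with coarse space $\mathbb P^1$. Your reduction and the paper's together would make the two conjectures equivalent, but neither direction yields an unconditional proof.

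On the technical side, your injectivity sketch has a genuine gap. From $\pi_1(\mathcal X_{\overline k})$-conjugacy of the sections attached to $\xi_1,\xi_2$ you cannot conclude that chosen lifts $y_1,y_2\in\mathcal Y(k')$ induce $\pi_1(\mathcal Y_{\overline k})$-conjugate sections: the conjugating element may lie in the larger group, and the deck group $\Aut(\mathcal Y/\mathcal X)$ permutes the possible lifts. Even if you obtain $y_1=y_2$ in $\mathcal Y(k')$, this only gives $\xi_1\simeq\xi_2$ in $\IsCl\mathcal X(k')$, and the map $\IsCl\mathcal X(k)\to\IsCl\mathcal X(k')$ need not be injective at stacky points (by Corollaire~\ref{paquets_cor} the fibre over a stacky $k$-point is $k^*/{k^*}^r$, and the restriction $\h^1(G_k,\boldsymbol\mu_r)\to\h^1(G_{k'},\boldsymbol\mu_r)$ has nontrivial kernel in general). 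Injectivity for hyperbolic orbicurves can indeed be proved unconditionally, but it requires either a direct Mordell--Weil argument adapted to the orbicurve setting or a much more careful descent bookkeeping than what you have outlined.
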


 Cette conjecture a un caractère folklorique. Elle est un peu plus fine que la conjecture des sections pour les courbes affines énoncée en termes de \og paquets \fg{} dans \cite{grothendieck_brief-faltings}. En effet, soit $X/k$ une courbe propre lisse, munie d'un diviseur $D$, telle que la courbe affine $U=X\backslash D$ soit hyperbolique, c'est-à-dire $2-2g-\deg D <0$. Alors pour $r>\frac {\deg D}{\deg D +2g-2}$, l'orbicourbe $\sqrt[r]{D/X}$ est hyperbolique. Comme d'après le lemme d'Abhyankar $\pi_1(U,\overline x)\simeq\varprojlim_r \pi_1(\sqrt[r]{D/X},\overline x)$ (voir \cite{borne_sur_2009}, Proposition 3.2.2), on en déduit que la validité de la Conjecture \ref{orbi_conjecture_section} implique que  
  \[s_U: \varprojlim_r \IsCl\sqrt[r]{D/X} (k) \rightarrow \HomExt_{G_k}(G_k,\pi_1(U_{\overline k},\overline x))  \]
  est une bijection. On a une injection canonique $U(k)\rightarrow \varprojlim_r \IsCl\sqrt[r]{D/X} (k)$, le membre de droite correspondant aux paquets auxquels il a été fait allusion dans l'introduction. Les points rationnels des champs des racines ont donc permis de compléter $U(k)$ de manière naturelle. On peut par ailleurs analyser précisément la contribution d'un point rationnel à l'infini (voir l'appendice \ref{paquets}) ce qui permet de donner une description explicite du membre de gauche, au moins lorsque $D$ est totalement décomposé.

\section{Épointage de sections}
\label{part_epointage}

\subsection{Épointage de sections profinies}
\label{part_epointage_prof}

On fixe $k$ un corps et $X/k$ une courbe propre lisse géométriquement connexe de genre $g\geq 2$.

 \begin{conj}[Conjecture d'épointage]
   \label{conjecture_epointage}
    Si $k$ est un corps de type fini sur $\mathbb Q$ et $U$ est un ouvert non vide de $X$, alors toute section de $\pi_1(X,\overline x) \rightarrow G_k$ se relève en une section de $\pi_1(U,\overline x) \rightarrow G_k$ :
    \begin{center}
\xymatrix@R=2pt{
&&&&\pi_1(U,\overline{x})\ar[dd]& \\
   &&&&&\\
  &&&&\pi_1(X,\overline{x})\ar[r]& G_k \ar@/_/[l] \ar@{.>}[uul]  }
\end{center}
\end{conj}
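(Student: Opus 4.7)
My plan is to attack the conjecture by reduction to its $l$-adic abelianized avatar, following the strategy that leads to Theorem \ref{thm_princ}. Given a section $s:G_k\to \pi_1(X,\overline x)$ of $(\ast_X)$, and a nonempty open $U = X\setminus D$, for each prime number $l$ one obtains by pushforward an $l$-adic abelianized section $s^{[ab,l]}$ of $(\ast_X)^{[ab,l]}$. The idea is to first lift each $s^{[ab,l]}$ to a section of $\pi_1^{[ab,l]}(U,\overline x)\to G_k$, then assemble these lifts compatibly as $l$ varies, and finally upgrade to a lift of the full profinite section $s$.

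For the first step I would invoke Theorem \ref{thm_princ} itself. Fix a finite Galois extension $k'/k$ containing the fields of definition of the points of $D$, and restrict attention to primes $l$ with $l\nmid [k':k]$. One must verify conditions (1) and (2) of the theorem. Condition (2) is, by Harari--Szamuely, equivalent to the existence of the very section over $U$ that we are trying to construct, so it must be fed back in from the existence of $s$ itself through the divisibility of the obstruction class in $H^1(G_k,\BPic^0_{X,D})$. For condition (1), one needs the support of $D$ to lie in a fiber of $X\to \BPic^N_{X/k}$ at a rational point with $N$ prime to $l$; geometrically, this forces all differences of points of $D$ to be torsion classes in the Jacobian of $X$.

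The second step---passing from a compatible family of $l$-adic abelian lifts to a lift of the full profinite section $s$---would require either a non-abelian refinement of the criterion, lifting through successive quotients of the lower central or nilpotent series of $\pi_1(X_{\overline k})$, or a geometric input extracting candidate ``cuspidal'' points of $U$ directly from $s$ in the spirit of the section conjecture. Since inertia at each cusp is pro-cyclic with $G_k$ acting by the cyclotomic character twisted by the Galois action on the cusps, such an argument would reduce, level by level, to controlling $H^1$ of Tate twists, with the $l$-adic step serving as the linear approximation.

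The main obstacle is the geometric condition (1): by Manin--Mumford it fails for a generic divisor $D$, so the $l$-adic abelian criterion alone is too restrictive to handle all pairs $(X,U)$. A full proof therefore demands a genuinely non-abelian argument, for which no current technique suffices; this is precisely why the conjecture remains open and Theorem \ref{thm_princ} stands as one of the strongest partial results in its direction.
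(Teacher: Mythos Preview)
The statement in question is a \emph{conjecture}, and the paper does not prove it. What the paper does is observe (immediately after stating it) that Conjecture~\ref{conjecture_epointage} is a consequence of the section conjecture (Conjecture~\ref{conjecture_section}): given the commutative square with $s_U$ and $s_X$, surjectivity of the left vertical map $\varprojlim_r \IsCl\sqrt[r]{D/X}(k)\to X(k)$ (Corollary~\ref{paquets_cor}) together with bijectivity of $s_X$ forces every section for $X$ to lift to one for $U$. That deduction is conditional on an open conjecture; unconditionally the statement remains open.

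Your proposal is not a proof either, and to your credit you say so explicitly in the final paragraph. The strategy you outline --- lift each $s^{[ab,l]}$ via Theorem~\ref{thm_princ}, then reassemble --- fails at every stage, for exactly the reasons you identify. Condition~(1) of Theorem~\ref{thm_princ} requires the points of $D$ to have pairwise torsion differences in the Jacobian, which by Manin--Mumford is impossible for generic $D$; condition~(2) is, via Harari--Szamuely, equivalent to the existence of an $l$-adic abelian section over $U$, so invoking it is circular; and there is no known device for promoting a compatible family of $l$-adic abelian lifts to a lift of the full profinite section. Your text is thus best read as a correct diagnosis of why Theorem~\ref{thm_princ} is only a partial result toward the conjecture, not as a proof attempt. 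If you intend to submit something here, it should be framed as commentary rather than as a proof.
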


La Conjecture \ref{conjecture_epointage} est une conséquence de la Conjecture \ref{conjecture_section}, comme on peut le voir à partir du diagramme

\begin{center}
    \xymatrix@R=30pt@C=90pt{
    \varprojlim_r \IsCl\sqrt[r]{D/X} (k)\ar[d] \ar[r]_{s_U} & \HomExt_{G_k}(G_k,\pi_1(U_{\overline k},\overline x))\ar[d]\\
X(k)    \ar[r]_{s_X}    & \HomExt_{G_k}(G_k,\pi_1(X_{\overline k},\overline x))         
}
\end{center}
En effet, le Corollaire \ref{paquets_cor} prouvé dans l'appendice \ref{app_paquets} implique que la flèche verticale de gauche est surjective. 

Réciproquement, si la Conjecture \ref{conjecture_epointage} est vraie, alors la conjecture des sections pour $U$ (resp. pour toute orbifolde $\mathcal X$ d'espace des modules $X$) implique la conjecture des sections pour $X$. On peut se servir de ce fait pour réduire la Conjecture \ref{conjecture_section} à la Conjecture \ref{conjecture_epointage}, et à la Conjecture \ref{orbi_conjecture_section} limitée aux orbicourbes hyperboliques d'espace des modules la droite projective. En effet, en partant d'une courbe hyperbolique $X$, et en fixant un morphisme non constant arbitraire $X\rightarrow \mathbb P^1$, il est aisé de construire un morphisme étale $\mathcal X \rightarrow \mathcal P$, où $\mathcal X$ (resp. $\mathcal P$) est une orbicourbe d'espace des modules $X$ (resp. $\mathbb P^1$). Mais on peut alors montrer en utilisant l'existence et l'unicité du relèvement des chemins étales que la conjecture des sections pour $\mathcal P$ entraîne la conjecture des sections pour $\mathcal X$ (pour ce dernier point, on pourra consulter \cite{stix_evidence_2011}, Chapter 13, pour le cas des schémas).

\subsection{Abélianisation de la partie géométrique}

Pour $l$ premier, on peut considérer le plus grand quotient pro-$l$-abélien $\h_1(X_{\overline{k}},\mathbb Z_l)$ de $\pi_1(X_{\overline{k}},\overline{x})$ et pousser la suite exacte 
\[ 1 \rightarrow \pi_1(X_{\overline{k}},\overline{x}) \rightarrow \pi_1(X,\overline{x})\rightarrow G_k \rightarrow 1 \]
par $\pi_1(X_{\overline{k}},\overline{x})\twoheadrightarrow \h_1(X_{\overline{k}},\mathbb Z_l)$, et de même pour $U$ (voir figure \ref{fig:abelianisation}).
\begin{figure}
  \begin{center}
    \xymatrix@R=30pt@C=0.5pt{
    & 1 \ar[rr]& &\pi_1(U_{\overline{k}}) \ar[rr]\ar[dl]\ar[dd] &&\pi_1(U) \ar[rr]\ar[dl]\ar[dd] && G_k \ar@{=}[dl]\ar[rr]\ar@{=}[dd]  && 1 \\
   1 \ar[rr]& &\pi_1(X_{\overline{k}}) \ar[rr]\ar[dd] &&\pi_1(X) \ar[rr]\ar[dd] && G_k \ar[rr]\ar@{=}[dd] && 1 &\\
   & 1 \ar[rr]& &\h_1(U_{\overline{k}},\mathbb Z_l) \ar[rr]\ar[dl] &&\pi_1^{[ab,l]}(U) \ar[rr]\ar[dl] && G_k \ar@{=}[dl]\ar[rr] && 1 \\
   1 \ar[rr]& &\h_1(X_{\overline{k}},\mathbb Z_l) \ar[rr] &&\pi_1^{[ab,l]}(X) \ar[rr] && G_k \ar[rr] && 1 &\\
  }
  \end{center}
  \caption{Abélianisation de la partie géométrique}
  \label{fig:abelianisation}
\end{figure}
On peut donc formuler un problème d'épointage des sections $l$-adiques :

\begin{center}
\xymatrix@R=2pt{
&&&&\pi_1(U,\overline{x})^{[ab,l]}\ar[dd]& \\
   &&&&&\\
  &&&&\pi_1(X,\overline{x})^{[ab,l]}\ar[r]& G_k \ar@/_/[l] \ar@{.>}[uul]  }
\end{center}

\begin{rem}
  La conjecture d'épointage n'implique pas une réponse positive au problème d'épointage des sections $l$-adiques, mais seulement que toute section de $\pi_1^{[ab,l]}(X,\overline x) \rightarrow G_k$ \emph{venant d'une section profinie de $\pi_1(X,\overline x) \rightarrow G_k$} se relève en une section de $\pi_1^{[ab,l]}(U,\overline x) \rightarrow G_k$. Cette dernière propriété, qui est une conséquence commune de la conjecture d'épointage et d'une réponse positive au problème d'épointage des sections $l$-adiques, peut se formuler de la façon suivante : toute section profinie $G_k \to \pi_1(X,\overline{x})$ se relève au quotient intermédiaire dans la suite d'épimorphismes
  
  \[ \pi_1(U,\overline{x})\twoheadrightarrow \pi_1(X,\overline{x})\times_{\pi_1(X,\overline{x})^{[ab,l]}}\pi_1(U,\overline{x})^{[ab,l]}  \twoheadrightarrow \pi_1(X,\overline{x}).\]
  Nous comptons revenir en détail sur ce quotient intermédiaire dans un travail ultérieur.
  
\end{rem}

\subsection{Une condition suffisante d'épointage des sections $l$-adiques}

\label{cond_suffisante_epointage}
Si $\pi_1^{[ab,l]}(X,\overline x) \rightarrow G_k$ n'admet pas de section, le problème d'épointage ne se pose  pas. Dans le cas contraire, il est équivalent à l'existence d'une section pour $\pi_1^{[ab,l]}(U,\overline x) \rightarrow G_k$ et à la surjectivité de $\h^1(G_k,\h_1(U_{\overline{k}},\mathbb Z_l))\rightarrow \h^1(G_k,\h_1(X_{\overline{k}},\mathbb Z_l))$.

La première condition est complètement comprise : une condition nécessaire et suffisante à l'existence d'une section pour $\pi_1^{[ab,l]}(U,\overline x) \rightarrow G_k$ a été donnée par Harari et Szamuely :
\begin{thm}[\cite{harari-szamuely;Galois-sections-abelianized}, Remark 2.4]
  \label{harari-szamuely}
  Le morphisme $\pi_1^{[ab,l]}(U,\overline x) \rightarrow G_k$ admet une section si et seulement si la classe de $\BPic^1_{X,D}$ appartient au sous-groupe $l$-divisible maximal de $H^1(G_k,\BPic^0_{X,D})$.
\end{thm}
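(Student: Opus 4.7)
Le plan consiste à identifier l'existence d'une section et la condition de divisibilité via un même connectant de Kummer. L'extension
\[ 1 \to \h_1(U_{\overline k}, \mathbb Z_l) \to \pi_1^{[ab,l]}(U,\overline x) \to G_k \to 1 \]
est classifiée par une classe $e \in H^2(G_k, \h_1(U_{\overline k}, \mathbb Z_l))$, son annulation équivalant à l'existence d'une section. Or le groupe $\h_1(U_{\overline k}, \mathbb Z_l)$ s'identifie au module de Tate $T_l \BPic^0_{X,D}$ de la jacobienne généralisée (elle-même extension de la jacobienne de $X$ par un tore). D'autre part, les suites de Kummer
\[ 0 \to \BPic^0_{X,D}[l^n] \to \BPic^0_{X,D} \xrightarrow{l^n} \BPic^0_{X,D} \to 0 \]
fournissent des connectants $\delta_n : H^1(G_k, \BPic^0_{X,D}) \to H^2(G_k, \BPic^0_{X,D}[l^n])$ dont le noyau est précisément le sous-groupe des éléments $l^n$-divisibles ; un passage à la limite projective (les systèmes en jeu étant Mittag-Leffler) produit un morphisme $\delta : H^1(G_k, \BPic^0_{X,D}) \to H^2(G_k, T_l \BPic^0_{X,D})$ dont le noyau est exactement le sous-groupe $l$-divisible maximal.

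Le c\oe{}ur de la preuve est alors l'identité $\delta([\BPic^1_{X,D}]) = e$. Pour l'établir, je fixe $n$ et considère le revêtement étale de $U$ obtenu en tirant en arrière l'isogénie $l^n : \BPic^0_{X,D} \to \BPic^0_{X,D}$ par le morphisme d'Abel-Jacobi $U \to \BPic^1_{X,D}$. L'extension de $G_k$-modules portée par la suite fondamentale de ce revêtement se calcule alors de deux façons : d'une part comme image de la réduction modulo $l^n$ de $e$ par l'isomorphisme canonique $\h_1(U_{\overline k}, \mathbb Z/l^n) \xrightarrow{\sim} \BPic^0_{X,D}[l^n]$, et d'autre part, par interprétation directe du torseur $\BPic^1_{X,D}$ sous $\BPic^0_{X,D}$, comme $\delta_n([\BPic^1_{X,D}])$. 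Le passage à la limite sur $n$ donne l'égalité annoncée.

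La combinaison de ces deux étapes fournit la chaîne d'équivalences : $\pi_1^{[ab,l]}(U,\overline x) \to G_k$ admet une section si et seulement si $e=0$, si et seulement si $\delta([\BPic^1_{X,D}])=0$, si et seulement si $[\BPic^1_{X,D}]$ appartient au sous-groupe $l$-divisible maximal de $H^1(G_k, \BPic^0_{X,D})$, ce qui conclut. L'obstacle principal est sans conteste l'identification $\delta([\BPic^1_{X,D}]) = e$ : elle requiert un entrelacement soigné entre la suite de Kummer pour la jacobienne généralisée et la fibration d'Abel-Jacobi $U \to \BPic^1_{X,D}$, dans l'esprit du lemme-clé d'Abel-Jacobi évoqué dans l'introduction.
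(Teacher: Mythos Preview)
Le papier ne fournit aucune démonstration de cet énoncé : il est simplement cité comme un résultat de Harari et Szamuely (\cite{harari-szamuely;Galois-sections-abelianized}, Remark~2.4), et sert d'ingrédient dans la preuve du Théorème~\ref{thm_princ}. Il n'y a donc pas de \og preuve du papier \fg{} à laquelle confronter ta proposition.

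Cela dit, ton esquisse reconstitue fidèlement l'argument de Harari--Szamuely : identifier la classe d'obstruction de la suite exacte fondamentale $[ab,l]$ à l'image de $[\BPic^1_{X,D}]$ par le connectant de Kummer dans $\h^2(G_k, T_l\BPic^0_{X,D})$, puis utiliser que le noyau de ce connectant est le sous-groupe $l$-divisible maximal. L'identification $\h_1(U_{\overline k},\mathbb Z_l)\simeq T_l\BPic^0_{X,D}$ et le calcul de $e_n=\delta_n([\BPic^1_{X,D}])$ via le tir\'e en arri\`ere de l'isog\'enie $l^n$ par l'Abel--Jacobi $U\to\BPic^1_{X,D}$ sont corrects. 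Le seul point que tu traites un peu vite est le passage à la limite : l'équivalence \og $e=0$ \fg{} $\Leftrightarrow$ \og $e_n=0$ pour tout $n$ \fg{} ne découle pas d'un argument de Mittag--Leffler sur les $\h^2$, mais plutôt d'un argument de compacité sur les espaces (profinis) de sections aux niveaux finis, ou bien de la suite exacte de Jannsen reliant $\h^2_{cont}(G_k,T_lA)$ à $\varprojlim_n \h^2(G_k,A[l^n])$, dont on n'a d'ailleurs besoin que de la surjectivité ici. Une fois ce point précisé, ta démonstration est complète.
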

(Ici, $\BPic^0_{X,D}$ désigne la jacobienne généralisée de $U$, et $\BPic^1_{X,D}$ le torseur universel de degré $1$ sous $\BPic^0_{X,D}$. On rappelle que $\BPic_{X,D}=\BPic_{X_D}$, le foncteur de Picard de $X_D$, pincement de $X$ le long de $D$ au sens de \cite{ferrand_conducteur_2003}. Le degré sur $X_D$ est défini par image réciproque le long de $X\to X_D$.)

La deuxième condition est plus mystérieuse. Une condition suffisante à la surjectivité de $\h^1(G_k,\h_1(U_{\overline{k}},\mathbb Z_l))\rightarrow \h^1(G_k,\h_1(X_{\overline{k}},\mathbb Z_l))$ est l'existence d'une section  du morphisme $\h_1(U_{\overline{k}},\mathbb Z_l) \twoheadrightarrow \h_1(X_{\overline{k}},\mathbb Z_l)$, compatible avec l'action du groupe de Galois absolu $G_k$.

\subsection{Traduction en cohomologie}

On note $K=\ker(  \h_1(U_{\overline{k}},\mathbb Z_l) \twoheadrightarrow \h_1(X_{\overline{k}},\mathbb Z_l))$ ; c'est un $\mathbb Z_l$-module de type fini muni d'une action continue de $G_k$.
Comme $\mathbb Z_l$-module, $K$ est même libre de rang $r-1$ où $r=\#|X_{\overline{k}}\backslash U_{\overline{k}}|$, comme il découle du théorème donnant la structure de $\pi_1(X_{\overline{k}},\overline{x})$ (voir \cite{grothendieck_revetements-etales-groupe-fondamental}, Exposé X, Théorème 2.6)(resp. de $\pi_1(U_{\overline{k}},\overline{x})$ [voir  \cite{grothendieck_revetements-etales-groupe-fondamental}, Exposé XIII, Corollaire 2.12]). Il est plus confortable d'étudier la suite duale de cohomologie. Précisément, on a d'après \cite{grothendieck_revetements-etales-groupe-fondamental}, Exposé XI, \S 5, un isomorphisme naturel de $\mathbb Z_l$-module libres de type fini munis d'une action continue de $G_k$ :
\[ \h^1(X_{\overline{k}},\mathbb Z_l(1)) \simeq \Hom(\h_1(X_{\overline{k}},\mathbb Z_l), \mathbb Z_l(1))\]
et de même pour $U$. Le foncteur  $\Hom(\cdot, \mathbb Z_l(1))$ étant une auto-anti-équivalence de la catégorie des $\mathbb Z_l$-module libres de type fini, qui conserve les suites exactes, l'épimorphisme $\h_1(U_{\overline{k}},\mathbb Z_l) \twoheadrightarrow \h_1(X_{\overline{k}},\mathbb Z_l)$ admet une section si et seulement si le monomorphisme $\h^1(X_{\overline{k}},\mathbb Z_l(1))\hookrightarrow \h^1(U_{\overline{k}},\mathbb Z_l(1))$ admet une rétraction. Le Théorème \ref{theoreme-caracterisation-fermes-engendrant-motis-purs} dira exactement quand cette condition se produit. 

\section{Application d'Abel-Jacobi}

\label{section-abel-jacobi}
\subsection{Définition}

On reproduit ici une construction d'Uwe Jannsen (voir \cite{jannsen_mixed-motives-K-theory}). Soit $X/k$ une variété propre et lisse, $j$ un entier, $\Z^j(X)$ le groupe abélien libre engendré par les cycles de codimension $j$ sur $X$, $\Z^j(X)_0$ le sous-groupe des cycles cohomologues à zéro. On définit un morphisme :
\[ \cl': \Z^j(X)_0\rightarrow \Ext_{{\mathbb Z}_l}^1(\mathbb Z_l, \h^{2j-1}(X_{\overline{k}},\mathbb Z_l(j))) \]
de la manière suivante. Soit $z$ un cycle de codimension $j$, cohomologue à zéro, on note $Z=|z|$ son support, et $U=X\backslash Z$. L'image $\cl'(z)$ de $z$ est la classe $[E]$ de l'extension $E$ obtenue à partir de la suite de cohomologie relative par image réciproque comme dans le diagramme suivant :
\begin{center}
\xymatrix@R=2pt{
0\ar[r] &\h^{2j-1}(X_{\overline k},\mathbb Z_l(j))\ar[r]&\h^{2j-1}(U_{\overline k},\mathbb Z_l(j))\ar[r]&\h^{2j}_{Z_{\overline k}}(X_{\overline k},\mathbb Z_l(j))\ar[r]&\h^{2j}(X_{\overline k},\mathbb Z_l(j)) \\
&&&&\\
0\ar[r] &\h^{2j-1}(X_{\overline k},\mathbb Z_l(j))\ar[r]\ar@{=}[uu]& E \ar[r]\ar[uu] & \mathbb Z_l\ar[r]\ar[uu]^{\cl(z)} & 0\ar[uu]
}
\end{center}
Le groupe suivant $0$ dans la première suite exacte est nul (au moins lorsque $Z$ est lisse, ce qui suffira par la suite) car, par dualité de Poincaré (voir \cite{jannsen_mixed-motives-K-theory}, Part II, \S 6) :
\[\h^{2j-1}_{Z_{\overline k}}(X_{\overline k},\mathbb Z_l(j)) \simeq \h_{2d-2j+1}(Z_{\overline k},\mathbb Z_l(d-j))\simeq \h^{2d-2j+1}(Z_{\overline k},\mathbb Z_l(d-j))^{\vee}=0\] vu que $2d-2j+1>2\dim Z$. Le carré de droite commute car $z$ est cohomologue à zéro. Enfin, on aurait pu seulement imposer $|z|\subset Z$, l'extension $E$ est alors indépendante du choix de $Z$, par fonctorialité de la suite de cohomologie relative en la paire $(X,Z)$.

On note $\Z_Z^j(X)$ le groupe des cycles de codimension $j$ à support dans $Z$, $\Z_Z^j(X)_0$ le sous-groupe des cycles homologiquement équivalents à zéro, et de manière similaire $\h^{2j}_{Z_{\overline k}}(X_{\overline k},\mathbb Z_l(j))_0$ le noyau de $\h^{2j}_{Z_{\overline k}}(X_{\overline k},\mathbb Z_l(j))\rightarrow \h^{2j}(X_{\overline k},\mathbb Z_l(j))$. On dispose d'une extension \og universelle \fg:
\[0\ \rightarrow \h^{2j-1}(X_{\overline k},\mathbb Z_l(j))\rightarrow \h^{2j-1}(U_{\overline k},\mathbb Z_l(j))\rightarrow \h^{2j}_{Z_{\overline k}}(X_{\overline k},\mathbb Z_l(j))_0 \rightarrow 0 \]
Les éléments $\cl'(z)$, pour $z\in \Z_Z^j(X)_0$, représentent des obstructions à l'existence d'une section. Comme $\cl: \Z_Z^j(X)_0 \rightarrow \h^{2j}_{Z_{\overline k}}(X_{\overline k},\mathbb Z_l(j))$ passe au quotient par l'équivalence rationnelle, on peut définir $\cl'$ sur le sous-groupe du groupe de Chow correspondant :
\begin{defi}
  On note encore \[ \cl': \CH^j(X)_0\rightarrow \Ext_{{\mathbb Z}_l}^1(\mathbb Z_l, \h^{2j-1}(X_{\overline{k}},\mathbb Z_l(j))) \] l'application obtenue par passage au quotient, à partir de celle définie ci-dessus.
\end{defi}
\begin{rem}
    Jannsen attribue à Deligne \cite{deligne_valeurs} l'idée d'associer une extension à un cycle. Ce dernier considère d'ailleurs plutôt le torseur sous $\h^{2j-1}(X_{\overline k},\mathbb Z_l(j))$ donné par l'image réciproque de $\cl(z)$ dans $\h^{2j-1}(U_{\overline k},\mathbb Z_l(j))$. C'est bien sûr un point de vue équivalent vu l'isomorphisme \[\Ext_{{\mathbb Z}_l}^1(\mathbb Z_l, \h^{2j-1}(X_{\overline{k}},\mathbb Z_l(j)))\simeq \h^1_{cont}(G_k,\h^{2j-1}(X_{\overline k},\mathbb Z_l(j)))\;.\] où $\h^*_{cont}$ désigne la cohomologie continue \cite{jannsen_continuous}.
  \end{rem}

\subsection{Injectivité lorsque $j=1$ : preuve \og{}élémentaire\fg{} }

  \begin{thm}[\cite{jannsen_mixed-motives-K-theory},\S 9.20]
    \label{jannsen-injectivite-abel-jacobi}
    Soient $X$ une variété propre et lisse sur un corps $k$ de type fini sur $\mathbb Q$, et $l$ un nombre premier. Alors l'application d'Abel-Jacobi :
\[ \cl': \CH^1(X)_0\otimes_{\mathbb Z} \mathbb Z_l \rightarrow \Ext_{{\mathbb Z}_l}^1(\mathbb Z_l, \h^{1}(X_{\overline{k}},\mathbb Z_l(1))) \]
est injective.
\end{thm}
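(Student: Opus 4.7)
The plan is to reduce the statement to the injectivity of a Kummer coboundary for the Picard variety of $X$, and then to invoke the Mordell-Weil theorem. Set $A = \BPic^0_{X/k}$, which is an abelian variety over $k$. Under the standard identification $\h^1(X_{\overline{k}}, \mathbb Z_l(1)) \simeq T_l A$ and the canonical isomorphism $\Ext^1_{\mathbb Z_l}(\mathbb Z_l, T_l A) \simeq \h^1_{cont}(G_k, T_l A)$ already mentioned in the preceding remark, the Abel-Jacobi map becomes a homomorphism $\CH^1(X)_0 \otimes \mathbb Z_l \to \h^1_{cont}(G_k, T_l A)$. The crucial input, which is precisely the content of the forthcoming Lemme \ref{lemme-Jannsen}, is the identification of $\cl'$, through the natural map $\CH^1(X)_0 \to A(k)$ (an isomorphism up to prime-to-$l$ torsion, which disappears after tensoring with $\mathbb Z_l$), with the Kummer coboundary $\delta: A(k) \otimes \mathbb Z_l \to \h^1_{cont}(G_k, T_l A)$.

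Granting this identification, the injectivity of $\cl' \otimes \mathbb Z_l$ reduces to the injectivity of $\delta$. Consider the Kummer short exact sequences $0 \to A[l^n] \to A \xrightarrow{l^n} A \to 0$; taking continuous Galois cohomology and passing to the projective limit over $n$ (the relevant $\varprojlim^1$ terms vanish by Mittag-Leffler, since the system $\{A(k)/l^n\}$ has surjective transition maps), one obtains a short exact sequence
\[ 0 \to \varprojlim_n A(k)/l^n A(k) \to \h^1_{cont}(G_k, T_l A) \to T_l \h^1(G_k, A) \to 0, \]
in which the leftmost arrow is by construction $\delta$. Because $k$ is of finite type over $\mathbb Q$, the Mordell-Weil theorem (generalized by Lang-Néron to fields of positive transcendence degree) ensures that $A(k)$ is a finitely generated abelian group. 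For any such $M$, the natural map $M \otimes_{\mathbb Z} \mathbb Z_l \to \varprojlim_n M/l^n M$ is an isomorphism; applied to $M = A(k)$, this yields the injectivity of $\delta$, hence of $\cl' \otimes \mathbb Z_l$.

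The main obstacle, and the only substantial content of the argument, is therefore the identification provided by Lemme \ref{lemme-Jannsen}: it interlaces the Kummer sequence for $A$ with the long exact sequence of relative cohomology used to define $\cl'$, in a manner formally reminiscent of — but more intricate than — the classical construction of the cycle class of a divisor in étale cohomology. Once this identification is established, the remainder of the proof is short: it combines Mordell-Weil with the identification of $l$-adic completion and $\otimes \mathbb Z_l$ on finitely generated abelian groups, both of which are standard.
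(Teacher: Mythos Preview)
Your proof is correct and follows essentially the same route as the paper: reduce to the Kummer coboundary for $A=\BPic^0_{X/k}$, pass to the limit using Mittag-Leffler, invoke Mordell--Weil (Lang--N\'eron) to identify $A(k)\otimes\mathbb Z_l$ with $\varprojlim_n A(k)/l^n$, and defer the identification $\cl'=\delta$ to Lemme~\ref{lemme-Jannsen}.

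One small correction: the natural map $\CH^1(X)_0\simeq\Pic^0(X)\to A(k)$ is a monomorphism, but it is \emph{not} in general an isomorphism up to prime-to-$l$ torsion; its cokernel sits inside $\ker(\mathrm{Br}(k)\to\mathrm{Br}(X))$, which may well have $l$-torsion (the paper notes this in a footnote). This does not affect your argument, since injectivity of the map is all you need, and tensoring an injection of abelian groups with the flat module $\mathbb Z_l$ preserves injectivity. You should also be aware that writing $\h^1_{cont}(G_k,T_lA)$ in the middle of your limit sequence implicitly uses a second $\varprojlim{}^1$ vanishing, namely $\varprojlim_n^1 A(k)[l^n]=0$; this again follows from Mordell--Weil (the groups are finite), and the paper makes this step explicit.
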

Pour la commodité du lecteur, nous reproduisons dans ce paragraphe le début de la preuve donnée par Uwe Jannsen, puis nous la complétons au paragraphe suivant. Pour une preuve de l'injectivité de l'application d'Abel-Jacobi dans le cadre plus général des $1$-motifs, on pourra consulter la thèse de Peter Jossen\footnote{Voir \href{http://www.jossenpeter.ch/PdfDvi/Dissertation.pdf}{\url{http://www.jossenpeter.ch/PdfDvi/Dissertation.pdf}}. Il n'est pas clair pour nous si la Proposition 6.2.1 de cette thèse donne une autre version uniquement de la première partie de la preuve, ou bien si elle donne une preuve complète du Théorème \ref{jannsen-injectivite-abel-jacobi}, qui contournerait le Lemme \ref{lemme-Jannsen}. Celui-ci nous paraît de toute façon d'un intérêt indépendant.}. 

\begin{proof}[Preuve du Théorème \ref{jannsen-injectivite-abel-jacobi}]
  Soit $A=\BPic^0_{X/k}$, et $n\geq 1$ un entier . De la suite de Kummer : \(0\rightarrow A[l^n] \rightarrow A \xrightarrow{\times l^n} A \rightarrow 0\), on tire :
  \[ 0\rightarrow \frac{A(k)}{l^n}\xrightarrow{\delta} \h^1(G_k, A(\overline{k})[l^n])\rightarrow \h^1(G_k,A)[l^n]\rightarrow 0 \]
En passant à la limite projective sur $n$, il vient :
\begin{equation}
  \label{Kummer}
0\rightarrow \varprojlim_n\frac{A(k)}{l^n}\xrightarrow{\delta} \varprojlim_n\h^1(G_k, A(\overline{k})[l^n])\rightarrow T_l(\h^1(G_k,A))\rightarrow 0
\end{equation}
En effet  $\varprojlim_n^1\frac{A(k)}{l^n}=0$, vu que les applications de transition $\frac{A(k)}{l^{n+1}}\rightarrow \frac{A(k)}{l^n}$ sont surjectives, donc le système projectif vérifie la condition de Mittag-Leffler.

Pour analyser le premier terme de \eqref{Kummer}, on note que, comme d'après le théorème de Mordell-Weil, $A(k)$ est un groupe abélien de type fini, on a \(A(k)\otimes_{\mathbb Z} \mathbb Z_l\simeq \varprojlim_n\frac{A(k)}{l^n}\), et on dispose d'un monomorphisme\footnote{Il ne s'agit pas en général d'un isomorphisme, voir par exemple \cite{stix_period-index}, \S2. La suite spectrale de Leray permet de voir que c'en est un si $X$ admet un $0$-cycle de degré $1$ défini sur $k$.} naturel $\CH^1(X)_0\simeq\Pic^0(X)\hookrightarrow \BPic^0_{X/k}(k)=A(k)$. En ce qui concerne le second terme, on a, d'après \cite{jannsen_continuous}, \S3, une suite exacte :
\[ 0\rightarrow \varprojlim_n^1 A(k)[l^n]\rightarrow \h^1_{cont}(G_k, T_l( A(\overline{k})))\rightarrow \varprojlim_n \h^1(G_k,  A(\overline{k})[l^n])\rightarrow 0\]
Or, toujours d'après le théorème de Mordell-Weil, $A(k)[l^n] $ est un $l$-groupe abélien fini, ce qui montre à nouveau que le système projectif vérifie la condition de Mittag-Leffler, et donc $ \varprojlim_n^1 A(k)[l^n]=0$. De plus, on dispose d'une suite exacte :
\[ 0 \to \Pic_0(X_{\overline{k}})[l^n] \to \Pic(X_{\overline{k}})[l^n] \to \NS(X_{\overline{k}})[l^n] \to 0 \]
où $\NS(X_{\overline{k}})$ désigne le groupe de Néron-Séveri de $X_{\overline{k}}$. Comme celui-ci est de type fini, on a, en passant à la limite projective : 
$T_l( A(\overline{k}))=T_l(\Pic_0(X_{\overline{k}})) \simeq T_l(\Pic(X_{\overline{k}}))$. La théorie de Kummer donne enfin \(T_l(\Pic(X_{\overline{k}}))\simeq \varprojlim_n \h^1(X_{\overline{k}},\mathbf{\mu}_{l^n})=\h^1(X_{\overline{k}},\mathbb Z_l(1)) \). On a donc obtenu un monomorphisme :
\[\delta: \CH^1(X)_0\otimes_{\mathbb Z} \mathbb Z_l \hookrightarrow \h^1_{cont}(G_k, \h^1(X_{\overline{k}},\mathbb Z_l(1))) \]
et le lemme suivant conclut donc la preuve du théorème :

\begin{lem}
  \label{lemme-Jannsen}
  Via l'isomorphisme naturel \(\Ext_{{\mathbb Z}_l}^1(\mathbb Z_l, \h^1(X_{\overline{k}},\mathbb Z_l(1)))\simeq \h^1_{cont}(G_k,\h^1(X_{\overline k},\mathbb Z_l(1)))\), on a \(\delta=\cl'\).
\end{lem}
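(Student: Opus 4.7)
The plan is to compare the two maps at each finite level $l^n$ and pass to the projective limit. Both $\delta$ and $\cl'$ arise as inverse limits (over $n$) of maps
$$\delta_n,\ \cl'_n : \CH^1(X)_0 \to \h^1(G_k,\h^1(X_{\overline k},\mu_{l^n})),$$
and the vanishing of the relevant $\varprojlim^1$'s (already established above) allows us to work one level at a time.

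Fix a class $[z] \in \CH^1(X)_0$ represented by a cycle $z$ with support $Z$. The cocycle representing $\delta_n(z)$ is obtained in the standard Kummer-theoretic way: pick $w\in \BPic^0_{X/k}(\overline k)$ with $l^n w = [z]$ in $\Pic^0(X_{\overline k})$, represent $w$ by a divisor $D$ on $X_{\overline k}$, and note that $l^n D - z = \Div(f)$ for some $f \in \overline k(X)^*$, well-defined modulo $\overline k^*$. Then $\delta_n(z)$ is represented by $\sigma \mapsto [\sigma(D)-D] \in \Pic^0(X_{\overline k})[l^n]$, which via the Kummer isomorphism $\Pic(X_{\overline k})[l^n]\simeq \h^1(X_{\overline k},\mu_{l^n})$ (using that $\overline k^*$ is $l^n$-divisible) yields a cocycle with values in $\h^1(X_{\overline k},\mu_{l^n})$.

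To describe $\cl'_n(z)$, observe that $(\mathcal O(D)|_{U_{\overline k}},f|_{U_{\overline k}})$ is Kummer data on $U_{\overline k}$---a line bundle together with a trivialisation of its $l^n$-th power---and so defines a class $\xi \in \h^1(U_{\overline k},\mu_{l^n})$. The crucial geometric input, which is the interweaving of the Kummer sequence and the relative cohomology sequence advertised in the introduction, is that $\xi$ maps to $\cl(z)\bmod l^n$ under the connecting map $\h^1(U_{\overline k},\mu_{l^n})\to \h^2_{Z_{\overline k}}(X_{\overline k},\mu_{l^n})$. Morally, the obstruction to extending the Kummer data $(\mathcal O(D),f)$ from $U_{\overline k}$ to $X_{\overline k}$ is exactly $z$, because $\Div(f) = l^n D - z$. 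This is the direct analogue, for the pair $(X_{\overline k},Z_{\overline k})$, of the classical description of the cycle class of a divisor via the Kummer connecting map, and can be proved by chasing the $3\times 3$ commutative diagram obtained by coupling the Kummer sequences of $\mathbb G_m$ on $X_{\overline k}$ and on $U_{\overline k}$ with the long exact sequence of the pair.

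Granting this, $\xi$ is a $\overline k$-rational lift of $1\in \mathbb Z/l^n$ in the pullback extension $E_n$ defining $\cl'_n(z)$, so $\cl'_n(z)$ is represented by the coboundary $\sigma\mapsto \sigma(\xi)-\xi \in \h^1(X_{\overline k},\mu_{l^n})$. A direct computation then finishes the proof: the function $\sigma(f)/f \in \overline k(X)^*$ has divisor $l^n(\sigma(D)-D)$ on $X_{\overline k}$, so the Kummer data $(\mathcal O(\sigma D - D),\sigma(f)/f)$ on $X_{\overline k}$ represents $\sigma(\xi)-\xi$, and under $\Pic(X_{\overline k})[l^n]\simeq \h^1(X_{\overline k},\mu_{l^n})$ this is precisely the image of $[\sigma(D)-D]$, i.e.\ the cocycle representing $\delta_n(z)$. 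The main technical obstacle is the careful setup of the interweaving diagram and the verification that the Kummer boundary maps on $X_{\overline k}$, $U_{\overline k}$ and relative to $Z_{\overline k}$ are compatible with the correct signs; once this is in place, the identification of the two cocycles is formal.
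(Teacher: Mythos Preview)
Your argument is correct in outline, but it takes a genuinely different route from the paper's.

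The paper works in the derived category $D^+(G_k\text{-Ab})$. Rather than picking an $l^n$-th root $D$ of $z$ and building an explicit preimage $\xi$ in $\h^1(U_{\overline k},\mu_{l^n})$, it identifies a single ``common ancestor'' of both constructions: the rigidified class $\cl(\mathcal O(z),1)\in \h^1_{Z_{\overline k}}(X_{\overline k},\mathbb G_m)^{G_k}$. Composing this morphism $\mathbb Z\to Rs_*i_*Ri^!\mathbb G_m[1]$ with either of the two legs of a commuting square of distinguished triangles (Kummer for $\mathbb G_m$ versus the relative-cohomology triangle) lands on the \emph{same} element of $\Hom_{D^+}(\mathbb Z, Rs_*\mu_{l^n}[2])=\h^2(X,\mu_{l^n})$. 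Since $z$ is homologically trivial, this element dies in $\h^2(X_{\overline k},\mu_{l^n})^{G_k}$; the canonical truncation $\tau_{-1}$ together with the hyperext spectral sequence then shows it lifts uniquely to $\Ext^1_{G_k}(\mathbb Z,\h^1(X_{\overline k},\mu_{l^n}))$, and a short lemma on triangles identifies this lift simultaneously with $\cl'(z)$ and with $\delta(z)$.

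Your approach trades this derived-category packaging for an explicit cocycle calculation: you exhibit a concrete splitting of $E_n$ over $\overline k$ via the Kummer datum $(\mathcal O(D)|_U,f|_U)$, and then compare Galois coboundaries by hand. This is more elementary and avoids truncations and spectral sequences entirely; the price is the $3\times3$ diagram chase (your ``main technical obstacle'') and the attendant sign bookkeeping, which the paper's abstract argument sidesteps by never choosing a root of $z$. Conversely, the paper's method makes the conceptual reason for the equality transparent---both maps arise from one and the same class in $\h^2(X,\mu_{l^n})$---and would adapt more readily to variants where explicit divisors are unavailable.
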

\begin{proof}
  La preuve est reportée à la partie \ref{sec-preuve-lemme-jannsen}. 
\end{proof}

\end{proof}

\begin{rem}
Pour la première partie de la preuve, on pourrait également considérer la suite exacte de systèmes projectifs de faisceaux étales :
\[ 0  \rightarrow (A[l^n],l) \rightarrow (A,l) \xrightarrow{l^n} (A,\id) \rightarrow 0 \]
La suite exacte longue en cohomologie continue donne la suite exacte :
\[ 0 \rightarrow l-\Div(A(k))\rightarrow A(k)\xrightarrow{\delta} \h^1_{cont}(G_k,T_l( A(\overline{k}))
\]
où $l-\Div(A(k))$ est le sous-groupe $l$-divisible maximal de $A(k)$. Comme $A(k)$ est de type fini, c'est simplement sa torsion première à $l$, ce qui constitue donc le noyau de $\delta$. Toutefois, la preuve donnée ici fournit un peu mieux, en particulier l'isomorphisme 
\[\h^1_{cont}(G_k, T_l( A(\overline{k}))\simeq \varprojlim_n \h^1(G_k,  A(\overline{k})[l^n])\]
justifie que pour donner la preuve du Lemme \ref{lemme-Jannsen}, on peut se contenter de travailler avec des coefficients finis.
\end{rem}

\subsection{Preuve du Lemme \ref{lemme-Jannsen}}
\label{sec-preuve-lemme-jannsen}

\subsubsection{Un diagramme périodique}
\label{sub-diag-per}
On doit montrer que le diagramme suivant commute~:

\begin{center}
\xymatrix@R=14pt{
&     \BPic^0_{X/k}(k) \ar@{=}[r] &A(k)  \ar[rrr]^{\delta}    &&&    \h^1_{cont}(G_k,A[l^n](\overline{k}))\\
   &\Pic^0(X)\ar[u]   &        &  &&   \h^1_{cont}(G_k,\Pic(X_{\overline{k}})[l^n]) \ar@{=}[u]\\
  \Z^1(X)_0   \ar[r]&  \CH^1(X)_0\ar@{=}[u]   \ar[rrrr]_{\cl'}     &          & &&  \Ext^1(\mathbb Z, \h^1(X_{\overline{k}},\mathbf{\mu}_{l^n})) \ar@{=}[u]   
}
\end{center}

Les extensions en jeu se visualisent bien sur la Figure \ref{fig:period}. La donnée de $z$ dans $\Z^1(X)_0$ définit une classe $\cl(\mathcal O(z),1)$ de $\mathbb G_m$-torseur sur $X_{\overline k}$ rigidifié en dehors de $Z_{\overline k}$ par l'identité, c'est-à-dire un élément de $\h^1_{Z_{\overline k}}(X_{\overline k},\mathbb G_m)$.  Cette classe est invariante par l'action de $G_k$ et son image dans $\h^2(X_{\overline{k}},\mathbf{\mu}_{l^n})$ est nulle. On en déduit que la composée du morphisme correspondant $\mathbb Z \rightarrow \h^1_{Z_{\overline k}}(X_{\overline k},\mathbb G_m)$ avec le morphisme $ \h^1_{Z_{\overline k}}(X_{\overline k},\mathbb G_m) \to \h^2_{Z_{\overline k}}(X_{\overline k},\mu_{l^n })$ (resp. avec le morphisme $ \h^1_{Z_{\overline k}}(X_{\overline k},\mathbb G_m) \to \h^1(X_{\overline k},\mathbb G_m)$) donne par image réciproque une extension de $\mathbb Z$ par $\h^1(X_{\overline k},\mu_{l^n })$ qui par définition est $\cl'(z)$ (resp. $\delta(z)$).

\begin{figure}
  \begin{center}
\xymatrix@R=30pt{
      & \h^1_{Z_{\overline k}}(X_{\overline{k}},\mathbf{\mu}_{l^n})=0 \ar[r] & \h^1(X_{\overline{k}},\mathbf{\mu}_{l^n}) \ar[r]  & \h^1(U_{\overline{k}},\mathbf{\mu}_{l^n}) \ar[r]& \h^2_{Z_{\overline k}}(X_{\overline{k}},\mathbf{\mu}_{l^n}) \ar[r]&  \h^2(X_{\overline{k}},\mathbf{\mu}_{l^n}) \\
0\ar[r] & \h^0_{Z_{\overline k}}(X_{\overline{k}},\mathbb G_m) \ar[r] \ar[u]& \h^0(X_{\overline{k}},\mathbb G_m ) \ar[r] \ar[u] & \h^0(U_{\overline{k}},\mathbb G_m ) \ar[r] \ar[u]& \h^1_{Z_{\overline k}}(X_{\overline{k}},\mathbb G_m ) \ar[r] \ar[u]&  \h^1(X_{\overline{k}},\mathbb G_m )  \ar[u]\\
0\ar[r] & \h^0_{Z_{\overline k}}(X_{\overline{k}},\mathbb G_m) \ar[r] \ar[u]& \h^0(X_{\overline{k}},\mathbb G_m ) \ar[r] \ar[u] & \h^0(U_{\overline{k}},\mathbb G_m ) \ar[r] \ar[u]& \h^1_{Z_{\overline k}}(X_{\overline{k}},\mathbb G_m ) \ar[r] \ar[u]&  \h^1(X_{\overline{k}},\mathbb G_m )  \ar[u]\\
0\ar[r] & \h^0_{Z_{\overline k}}(X_{\overline{k}},\mathbf{\mu}_{l^n}) \ar[r] \ar[u]& \h^0(X_{\overline{k}},\mathbf{\mu}_{l^n}) \ar[r] \ar[u] & \h^0(U_{\overline{k}},\mathbf{\mu}_{l^n}) \ar[r] \ar[u]& \h^1_{Z_{\overline k}}(X_{\overline{k}},\mathbf{\mu}_{l^n}) \ar[r] \ar[u]&  \h^1(X_{\overline{k}},\mathbf{\mu}_{l^n})  \ar[u]\\
&0  \ar[u]& 0 \ar[u] &0\ar[r] \ar[u] &  \h^0_{Z_{\overline k}}(X_{\overline{k}},\mathbb G_m) \ar[r] \ar[u]& \h^0(X_{\overline{k}},\mathbb G_m ) \ar[u]_0 
}
     \end{center}
  \caption{Diagramme périodique de période $(3,-3)$}
  \label{fig:period}
\end{figure}

\subsubsection{Dans la catégorie dérivée}

On note $i:Z \rightarrow X$ l'immersion fermée, $j:U\rightarrow X$ l'immersion ouverte. La suite de cohomologie relative est incarnée par un diagramme de triangles distingués dans $D^+(\widetilde{X_{et}})$ (voir Figure \ref{fig:cohrel}) 
\begin{figure}
\begin{center}
  \xymatrix@R=30pt{
i_*Ri^! \mathbf{\mu}_{l^n}[1] \ar[r] &\mathbf{\mu}_{l^n}[1]\ar[r] &Rj_*j^*\mathbf{\mu}_{l^n}[1]\ar[r]&i_*Ri^! \mathbf{\mu}_{l^n}[2]\\
i_*Ri^! \mathbb G_m\ar[r] \ar[u]&\mathbb G_m\ar[r]\ar[u] &Rj_*j^*\mathbb G_m\ar[r]\ar[u]&i_*Ri^! \mathbb G_m[1]\ar[u]\\
i_*Ri^! \mathbb G_m\ar[r] \ar[u]&\mathbb G_m\ar[r]\ar[u] &Rj_*j^*\mathbb G_m\ar[r]\ar[u]&i_*Ri^! \mathbb G_m[1]\ar[u]\\
i_*Ri^! \mathbf{\mu}_{l^n} \ar[r] \ar[u]&\mathbf{\mu}_{l^n}\ar[r]\ar[u] &Rj_*j^*\mathbf{\mu}_{l^n}\ar[r]\ar[u]&i_*Ri^! \mathbf{\mu}_{l^n}[1]\ar[u]\\
}
\end{center}
  \caption{Cohomologie relative}
  \label{fig:cohrel}
\end{figure}dont tous les carrés commutent sauf celui en haut à droite qui anticommute. Soient $s:X\rightarrow \Spec k$ le morphisme structurel, et $G_k\dashab$ la catégorie des groupes abéliens munis d'une action continue de $G_k$.  L'identification entre $G_k\dashab$ et faisceaux de groupes abéliens sur $(\Spec k)_{et}$, et celle des objets de $G_k\dashab$ avec les complexes concentrés en degré $0$, fournit un diagramme commutatif (voir Figure \ref{fig:diagcom}).
\begin{figure}
\begin{center}
   \xymatrix@R=50pt@C=50pt{
   & & Rs_*i_*Ri^! \mathbf{\mu}_{l^n}[2]\ar[dr]&\\
   \mathbb Z\ar[r]_{\cl(\mathcal O(z),1)}\ar[rrd]_{\cl(\mathcal O(z))}\ar[urr]^{\cl(z)} &Rs_*i_*Ri^! \mathbb G_m[1]\ar[ur]\ar[dr]& & Rs_*\mathbf{\mu}_{l^n}[2]\\   
&&Rs_*\mathbb G_m[1]\ar[ur]&
}
\end{center}
\caption{Un élément de $\Hom_{D^+(G_k\dashab)}( \mathbb Z, Rs_*\mathbf{\mu}_{l^n}[2])$}
  \label{fig:diagcom}
\end{figure}
On va voir qu'aussi bien $\cl'(z)$ que $\delta(z)$ sont déterminés par l'élément $\Hom_{D^+(G_k\dashab)}( \mathbb Z, Rs_*\mathbf{\mu}_{l^n}[2])=H^2(X, \mathbf \mu_{l^n})$ figurant sur le diagramme de la Figure \ref{fig:diagcom}.

\subsubsection{La filtration de $Rs_*\mathbf{\mu}_{l^n}[2]$}
Pour un complexe de cochaînes $C^*$, et un entier relatif $a$, on note $\tau_aC^*$ la filtration croissante canonique\footnote{$\tau_aC^*$ est la \og{}bonne\fg{} troncation de $C^*$, par opposition à la troncation \og{}brutale\fg{}, voir par exemple \cite{weibel_introduction_1994}, 1.2.7, pour le cas dual d'un complexe de chaînes.} de $C^*$, définie pour un entier relatif $b$ par :
\[ 
(\tau_aC^*)^b=
\begin{cases}
  C^b & \text{si } b<a \\
  Z^a & \text{si } b=a \\
  0 & \text{si } b>a
\end{cases}
\]
On dispose d'un monomorphisme naturel $\tau_aC^*\rightarrow C^*$, on note $q_a C^*$ le complexe quotient.
Il existe en outre un morphisme naturel $\tau_a C^*\rightarrow H^a(C^*)[-a]$.

Pour en revenir à $Rs_*\mathbf{\mu}_{l^n}[2]$, on dispose donc d'une part d'un morphisme $\tau_{-1}(Rs_*\mathbf{\mu}_{l^n}[2])\rightarrow H^{-1}(Rs_*\mathbf{\mu}_{l^n}[2])[1]=H^1(X_{\overline k},\mathbb \mu_{l^n})[1]$, qui induit un morphisme $\Hom_{D^+(G_k\dashab)}( \mathbb Z, \tau_{-1}Rs_*\mathbf{\mu}_{l^n}[2])\rightarrow \Hom_{D^+(G_k\dashab)}( \mathbb Z, H^1(X_{\overline k},\mathbb \mu_{l^n})[1])=\Ext^1_{G_k\dashab}(\mathbb Z, H^1(X_{\overline k},\mathbb \mu_{l^n}))$, et d'autre part d'une suite exacte :
\[ 0 \rightarrow \Hom_{D^+(G_k\dashab)}( \mathbb Z, \tau_{-1}Rs_*\mathbf{\mu}_{l^n}[2]) \rightarrow \Hom_{D^+(G_k\dashab)}( \mathbb Z, Rs_*\mathbf{\mu}_{l^n}[2])\rightarrow \Hom_{D^+(G_k\dashab)}( \mathbb Z, q_{-1}Rs_*\mathbf{\mu}_{l^n}[2])\]
A présent, vu que 
\[
H^{i}(q_{-1}Rs_*\mathbf{\mu}_{l^n}[2])=
\begin{cases}
  0 & \text{si }i<0 \\
  H^{i}(Rs_*\mathbf{\mu}_{l^n}[2]) & \text{si }i\geq 0
\end{cases}
\]
on déduit de la suite spectrale d'hyperext
\[\Ext^p_{G_k\dashab}(\mathbb Z, \h^q(q_{-1}Rs_*\mathbf{\mu}_{l^n}[2])) \implies \Ext^{p+q}_{D^+(G_k\dashab)}(\mathbb Z, q_{-1}Rs_*\mathbf{\mu}_{l^n}[2])\]
que le morphisme 
\[\Hom_{D^+(G_k\dashab)}( \mathbb Z, q_{-1}Rs_*\mathbf{\mu}_{l^n}[2])\rightarrow \Hom_{G_k\dashab}(\mathbb Z,H^0(q_{-1}Rs_*\mathbf{\mu}_{l^n}[2]))=H^{2}(X_{\overline k},\mathbb \mu_{l^n})^{G_k}\]
est injectif. La classe $\cl(z)$ (resp. $\cl(\mathcal O(z))$) détermine un élément (resp. le même élément) de 
\[\Hom_{D^+(G_k\dashab)}( \mathbb Z, Rs_*\mathbf{\mu}_{l^n}[2])=H^2(X, \mathbf \mu_{l^n})\] qui est nul dans $H^{2}(X_{\overline k},\mathbb \mu_{l^n})$ (et donc dans $\Hom_{D^+(G_k\dashab)}( \mathbb Z, q_{-1}Rs_*\mathbf{\mu}_{l^n}[2])$) car $z$ est homologiquement équivalent à zéro par hypothèse. 
Cet élément se relève ainsi dans $\Hom_{D^+(G_k\dashab)}( \mathbb Z, \tau_{-1}Rs_*\mathbf{\mu}_{l^n}[2])$, et donne donc un élément de $\Hom_{D^+(G_k\dashab)}( \mathbb Z, H^1(X_{\overline k},\mathbb \mu_{l^n})[1])=\Ext^1_{G_k\dashab}(\mathbb Z, H^1(X_{\overline k},\mathbb \mu_{l^n}))$. \`A l'aide du lemme suivant, et de la discussion du \S \ref{sub-diag-per}, on montre qu'il s'agit de $\cl'(z)$ (resp. de $\delta(z)$), d'où l'égalité.

\begin{lem}
Dans $D^+(G_k\dashab)$ supposons donné un diagramme commutatif :
\begin{center}
	\xymatrix{
		&\mathbb Z \ar[r]\ar[d]_x & \tau_{-1} C^*\ar[d] \\
	A^*\ar[r] & B^*\ar[r] &C^*\ar[r] &A^*[1]
}
\end{center}
où la ligne du bas est un triangle exact tel que le morphisme $\h^{-1}(B^*)\to \h^{-1}(C^*)$ soit nul. Alors l'élément de $\Ext^1_{G_k\dashab}(\mathbb Z,\h^{-1}(C^*))$ défini par la composition $\mathbb Z \to \tau_{-1} C^* \to \h^{-1}(C^*)[1]$  est la classe de l'extension obtenue par produit fibré à partir du diagramme :
\begin{center}
	\xymatrix{
		&&&\mathbb Z \ar[d]_{\h^0(x)} \ar[rd]^0 &\\
	0\ar[r]& \h^{-1}(C^*)	\ar[r]&\h^0(A^*)\ar[r] & \h^0(B^*)\ar[r] &\h^0(C^*)
}
\end{center}
\end{lem}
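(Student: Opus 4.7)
The plan is to realize both Ext classes as $\h^0$ of a common fibre object in $D^+(G_k\dashab)$, using the octahedral axiom and axiom TR3. Let me write $\alpha : \mathbb Z \to \tau_{-1}C^*$ and $\iota : \tau_{-1}C^* \to C^*$ for the two morphisms of the top row, $f : A^* \to B^*$ and $g : B^* \to C^*$ for those of the bottom row, and $\beta : \tau_{-1}C^* \to \h^{-1}(C^*)[1]$ for the truncation morphism. I would rely on the standard identification under which an element of $\Hom_{D^+}(\mathbb Z, M[1]) \simeq \Ext^1_{G_k\dashab}(\mathbb Z, M)$ corresponds, via the triangle $\mathrm{fib}(\phi) \to \mathbb Z \to M[1] \to \mathrm{fib}(\phi)[1]$ with $\mathrm{fib}(\phi) := \mathrm{cone}(\phi)[-1]$, to the extension $0 \to M \to \h^0(\mathrm{fib}(\phi)) \to \mathbb Z \to 0$ read off from its long exact sequence in cohomology.

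First I would apply the octahedral axiom to the composition $\mathbb Z \xrightarrow{\alpha} \tau_{-1}C^* \xrightarrow{\beta} \h^{-1}(C^*)[1]$. Since $\mathrm{fib}(\beta) = \tau_{-2}C^*$ has vanishing cohomology in degrees $-1$ and $0$, the resulting triangle $\mathrm{fib}(\alpha) \to \mathrm{fib}(\beta\alpha) \to \tau_{-2}C^* \to \mathrm{fib}(\alpha)[1]$ produces an isomorphism $\h^0(\mathrm{fib}(\alpha)) \xrightarrow{\sim} \h^0(\mathrm{fib}(\beta\alpha))$. The underlying morphism of triangles has identity middle component and $\beta$ on the right; since $\h^{-1}(\beta)$ is the identity on $\h^{-1}(C^*)$ (a formal property of the truncation quotient), this isomorphism respects the natural extension structures of $\mathbb Z$ by $\h^{-1}(C^*)$. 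Thus the class of $\beta\alpha$ is represented by $0 \to \h^{-1}(C^*) \to \h^0(\mathrm{fib}(\alpha)) \to \mathbb Z \to 0$.

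Next, I would use the commutativity $\iota\alpha = g x$ of the hypothesis square, together with axiom TR3, to produce a morphism $\psi : \mathrm{fib}(\alpha) \to A^*$ fitting into a morphism of distinguished triangles with components $(\psi, x, \iota)$ from $\mathrm{fib}(\alpha) \to \mathbb Z \xrightarrow{\alpha} \tau_{-1}C^* \to \mathrm{fib}(\alpha)[1]$ to $A^* \xrightarrow{f} B^* \xrightarrow{g} C^* \to A^*[1]$. Passing to $\h^0$ and using $\h^{-1}(\iota) = \mathrm{id}_{\h^{-1}(C^*)}$, this yields a commutative diagram with exact rows
\[
\xymatrix@R=14pt{
0 \ar[r] & \h^{-1}(C^*) \ar[r] \ar@{=}[d] & \h^0(\mathrm{fib}(\alpha)) \ar[r] \ar[d]_{\h^0(\psi)} & \mathbb Z \ar[r] \ar[d]^{\h^0(x)} & 0 \\
0 \ar[r] & \h^{-1}(C^*) \ar[r]^{\delta} & \h^0(A^*) \ar[r]^{\h^0(f)} & \h^0(B^*) &
}
\]
the bottom row being exact thanks to the hypothesis $\h^{-1}(B^*) \to \h^{-1}(C^*) = 0$. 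The universal property of the fibre product $E = \h^0(A^*) \times_{\h^0(B^*)} \mathbb Z$ then yields a map $\h^0(\mathrm{fib}(\alpha)) \to E$, which by the five lemma will be an isomorphism of extensions; combined with the previous step, this identifies the class of $\beta\alpha$ in $\Ext^1$ with $[E]$.

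The main subtlety will be to verify rigorously that the natural maps $\h^{-1}(\iota)$ and $\h^{-1}(\beta)$ are indeed the identity on $\h^{-1}(C^*)$; this follows from the defining property of the canonical truncation, and once this is in hand the octahedral axiom and five lemma assemble the comparison cleanly, with no sign issue beyond the convention $\tau_a = \tau_{\leq a}$ fixed in the paper.
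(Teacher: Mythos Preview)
Your argument is correct: the octahedral axiom identifies the extension read off from $\mathrm{fib}(\beta\alpha)$ with that read off from $\mathrm{fib}(\alpha)$, and the morphism of triangles furnished by TR3, once passed to cohomology, exhibits $\h^0(\mathrm{fib}(\alpha))$ as the fibre product $E$ via the five lemma. The verification that $\h^{-1}(\iota)$ and $\h^{-1}(\beta)$ are the identity is indeed immediate from the definition of the canonical truncation, and the non-uniqueness of $\psi$ in TR3 is harmless since any choice yields the same commutative diagram in $\h^0$.

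As for comparison: the paper's proof consists in its entirety of the sentence \og{}La v\'erification est imm\'ediate\fg{}. Your proposal therefore supplies exactly the details the authors chose to suppress. One could also argue more na\"ively, by representing $\alpha$ at the level of complexes and writing out the Yoneda extension by hand, which is presumably what the authors have in mind; your derived-category packaging via the octahedral axiom is cleaner and avoids choosing explicit representatives, at the cost of invoking slightly heavier machinery for what is ultimately a routine check.
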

\begin{proof}
La vérification est immédiate.	
\end{proof}

\section{Conséquence de l'injectivité en dimension $1$}

  Dans cette partie, $X$ désigne une courbe propre, lisse, géométriquement connexe sur un corps $k$ de type fini sur $\mathbb Q$. On fixe de plus $Z\subsetneq X$ un fermé strict, et on note $U=X\backslash Z$ l'ouvert complémentaire. Désormais on prendra $j=1$. On va donner une condition nécessaire et suffisante sur $Z$ pour que la suite exacte 
   \begin{equation}
   0\ \rightarrow \h^{1}(X_{\overline k},\mathbb Z_l(1))\rightarrow \h^{1}(U_{\overline k},\mathbb Z_l(1))\rightarrow \h^{2}_{Z_{\overline k}}(X_{\overline k},\mathbb Z_l(1))_0 \rightarrow 0
   \label{suite_exacte_cohomologie}
 \end{equation}
soit scindée pour presque tout nombre premier $l$.

\begin{rem}
  Comme le note Uwe Jannsen (voir \cite{jannsen_mixed-motives-K-theory}, 5.9.2, voir aussi \cite{jannsen_weights_2010}), d'après la théorie des poids en cohomologie étale, $\h^{1}(X_{\overline k},\mathbb Z_l(1))$ (resp. $\h^{2}_{Z_{\overline k}}(X_{\overline k},\mathbb Z_l(1))_0$) est une représentation pure de poids $-1$ (resp. $0$). La représentation $\h^{1}(U_{\overline k},\mathbb Z_l(1))$ représente un prototype de motif mixte dont on étudie une condition de pureté.   \end{rem}

\subsection{Cas d'un diviseur totalement décomposé}

\label{totalement-décomposé}

On suppose dans ce paragraphe que 
$Z\subsetneq X$ est un fermé strict totalement décomposé (i.e $\forall z \in |Z|\; k(z)=k$). 
 On peut alors utiliser l'additivité des $\Ext$ pour montrer que les $\cl'(z)$, pour $z\in \Z_Z^1(X)_0$, contrôlent complètement la classe de l'extension   \eqref{suite_exacte_cohomologie}.

\begin{lem}
  \label{reduction_deux_points}
  La suite exacte \eqref{suite_exacte_cohomologie} se scinde si et seulement si 
  \[\forall z \in \Z_Z^1(X)_0 \; \cl'(z)=0\; .\]
\end{lem}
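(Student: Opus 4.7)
The plan exploits the total decomposition of $Z$ to reduce the splitting problem to the vanishing of finitely many Abel-Jacobi classes. Because every $z\in |Z|$ is $k$-rational, the purity isomorphism gives $\h^2_{z_{\bar k}}(X_{\bar k},\mathbb Z_l(1))\simeq \mathbb Z_l$ with \emph{trivial} $G_k$-action, and summing over the support yields a $G_k$-equivariant isomorphism $\h^2_{Z_{\bar k}}(X_{\bar k},\mathbb Z_l(1))\simeq \bigoplus_{z\in|Z|} \mathbb Z_l\cdot\cl(z)$. The map to $\h^2(X_{\bar k},\mathbb Z_l(1))\simeq \mathbb Z_l$ is the total degree, so after picking any $z_0\in|Z|$ and enumerating the remaining points as $z_1,\ldots,z_n$, the group $\h^2_{Z_{\bar k}}(X_{\bar k},\mathbb Z_l(1))_0$ is free of rank $n$ as a $\mathbb Z_l$-module with trivial Galois action, admitting $(\cl(z_i-z_0))_{i=1,\ldots,n}$ as a basis.

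By additivity of $\Ext^1$ in its first variable, the class of the extension \eqref{suite_exacte_cohomologie} in $\Ext^1_{G_k\dashab}(\h^2_{Z_{\bar k}}(X_{\bar k},\mathbb Z_l(1))_0, \h^1(X_{\bar k},\mathbb Z_l(1)))$ corresponds, via this basis, to an $n$-tuple in $\Ext^1_{G_k\dashab}(\mathbb Z_l, \h^1(X_{\bar k},\mathbb Z_l(1)))^n \simeq \h^1_{cont}(G_k, \h^1(X_{\bar k},\mathbb Z_l(1)))^n$; the $i$-th entry is the pullback of \eqref{suite_exacte_cohomologie} along the map $\mathbb Z_l\hookrightarrow \h^2_{Z_{\bar k}}(X_{\bar k},\mathbb Z_l(1))_0$ sending $1$ to $\cl(z_i-z_0)$. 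By the very definition of $\cl'$ recalled in \S\ref{section-abel-jacobi}, where \eqref{suite_exacte_cohomologie} is nothing but the universal extension whose pullback along $\cl(z)$ is $\cl'(z)$, this $i$-th entry equals $\cl'(z_i-z_0)$.

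The extension \eqref{suite_exacte_cohomologie} therefore splits if and only if $\cl'(z_i-z_0)=0$ for all $i$; since $\Z_Z^1(X)_0$ is the free $\mathbb Z$-module with basis $(z_i-z_0)_{i=1,\ldots,n}$ and $\cl'$ is $\mathbb Z$-linear, this is equivalent to $\cl'(z)=0$ for every $z\in\Z_Z^1(X)_0$. The only non-formal step is the reduction to basis elements via additivity of $\Ext^1$, which crucially relies on the triviality of the $G_k$-action on $\h^2_{Z_{\bar k}}(X_{\bar k},\mathbb Z_l(1))_0$; without the total-decomposition hypothesis, the Galois action on $|Z|$ would obstruct this decomposition and one would instead have to work with cycles invariant only after passage to a splitting field of the support.
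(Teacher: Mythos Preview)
Your proof is correct and follows essentially the same approach as the paper's: both use the Thom-Gysin/purity identification of $\h^2_{Z_{\overline k}}(X_{\overline k},\mathbb Z_l(1))_0$ with a free $\mathbb Z_l$-module of trivial $G_k$-action (thanks to total decomposition), then invoke additivity of $\Ext^1$ to decompose the extension class along a basis of $\Z_Z^1(X)_0$ and recognize each component as $\cl'$ of a basis element. The only difference is cosmetic: you fix the explicit basis $(z_i-z_0)_i$, whereas the paper works with an arbitrary basis $\mathcal B$ and writes the class as $\sum_{z\in\mathcal B}\cl'(z)\otimes z$.
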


\begin{rem}
  Le groupe $\Z_Z^1(X)_0$ est abélien libre de rang $r-1$, où $r=\#|Z|$, engendré par les éléments du type $z-z'$, où $z,z'$ appartiennent à $|Z|$, la condition dans le Lemme \ref{reduction_deux_points} est donc équivalente à : $\forall z,z'\in |Z|\; \cl'(z'-z)=0$. 
\end{rem}

\begin{proof}[Démonstration du Lemme \ref{reduction_deux_points}]
  On a un isomorphisme (dit de Thom-Gysin, voir \cite{deligne_poids_1975}) :
  \[\mathbb Z_l\otimes \Z_{Z_{\overline k}}^1(X_{\overline k})_0 \xrightarrow{\cl} \h^{2}_{Z_{\overline k}}(X_{\overline k},\mathbb Z_l(1))_0\]
de $\mathbb Z_l[G_k]$-modules ; par hypothèse sur $Z$ l'action de $G_k$ sur le premier membre $\mathbb Z_l\otimes \Z_{Z_{\overline k}}^1(X_{\overline k})_0\simeq \mathbb Z_l\otimes \Z_{Z}^1(X)_0$ (et donc sur le deuxième) est triviale.  Celui-ci induit à son tour un isomorphisme :
\[\Ext_{{\mathbb Z}_l}^1(\h^{2}_{Z_{\overline k}}(X_{\overline k},\mathbb Z_l(1))_0, \h^{1}(X_{\overline{k}},\mathbb Z_l(1))) \simeq \Ext_{{\mathbb Z}_l}^1(\mathbb Z_l, \h^{1}(X_{\overline{k}},\mathbb Z_l(1)))\otimes \Z_Z^1(X)_0 \]
Pour toute base $\mathcal B$ de $\Z_Z^1(X)_0$, la classe de l'extension \eqref{suite_exacte_cohomologie} est envoyée sur $\sum_{z\in \mathcal B} \cl'(z)\otimes z$.
\end{proof}

  \begin{cor}
    \label{corrolaire-totalement-decompose}
        Soit $l$ un nombre premier. La suite exacte
	\[  0\ \rightarrow \h^{1}(X_{\overline k},\mathbb Z_l(1))\rightarrow \h^{1}(U_{\overline k},\mathbb Z_l(1))\rightarrow \h^{2}_{Z_{\overline k}}(X_{\overline k},\mathbb Z_l(1))_0 \rightarrow 0\]
se scinde si et seulement si pour tout $z,z'\in |Z|$, le cycle $z'-z$ est de torsion première à $l$ dans $\CH^1(X)_0$.
\end{cor}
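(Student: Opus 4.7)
Le plan consiste simplement � encha�ner le Lemme \ref{reduction_deux_points} avec le Th�or�me \ref{jannsen-injectivite-abel-jacobi}. D'apr�s ce lemme et la remarque qui le suit, la suite exacte \eqref{suite_exacte_cohomologie} se scinde si et seulement si $\cl'(z'-z) = 0$ dans $\Ext^1_{\mathbb Z_l}(\mathbb Z_l,\h^1(X_{\overline k},\mathbb Z_l(1)))$ pour tous $z,z' \in |Z|$, puisque ces diff�rences engendrent $\Z_Z^1(X)_0$.

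J'invoquerais ensuite le Th�or�me \ref{jannsen-injectivite-abel-jacobi}, qui assure que l'application d'Abel-Jacobi
\[ \cl' : \CH^1(X)_0\otimes_{\mathbb Z}\mathbb Z_l \hookrightarrow \Ext^1_{\mathbb Z_l}(\mathbb Z_l,\h^1(X_{\overline k},\mathbb Z_l(1))) \]
est injective. Par cons�quent, la condition obtenue � l'�tape pr�c�dente se traduit par : pour tous $z,z' \in |Z|$, l'image de $z'-z$ dans $\CH^1(X)_0 \otimes_{\mathbb Z} \mathbb Z_l$ est nulle.

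Il reste � reconna�tre cette derni�re condition comme celle de l'�nonc�. Par le th�or�me de Mordell-Weil (d�j� exploit� dans la preuve du Th�or�me \ref{jannsen-injectivite-abel-jacobi} via le monomorphisme $\CH^1(X)_0 = \Pic^0(X) \hookrightarrow \BPic^0_{X/k}(k)$), le groupe $\CH^1(X)_0$ est ab�lien de type fini. Pour un tel groupe $M$, le noyau du morphisme canonique $M \to M\otimes_{\mathbb Z}\mathbb Z_l$ s'identifie pr�cis�ment � la partie de torsion premi�re � $l$ de $M$ : cela se lit sur la d�composition de $M$ en ses parties libre, $l$-primaire et premi�re � $l$, puisque les entiers premiers � $l$ sont inversibles dans $\mathbb Z_l$. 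Appliqu� � $M = \CH^1(X)_0$, ce crit�re �l�mentaire conclut.

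Aucun de ces trois pas ne pr�sente de difficult� s�rieuse ; le contenu technique du corollaire a �t� concentr� dans l'injectivit� de l'application d'Abel-Jacobi (Th�or�me \ref{jannsen-injectivite-abel-jacobi}) et dans l'additivit� des $\Ext$ utilis�e au Lemme \ref{reduction_deux_points}.
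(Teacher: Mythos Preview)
Your proof is correct and follows exactly the same route as the paper, which simply records that the corollary results from the Lemme \ref{reduction_deux_points} and the Th\'eor\`eme \ref{jannsen-injectivite-abel-jacobi}. You have merely made explicit the elementary identification (via Mordell--Weil and the structure of $M\otimes_{\mathbb Z}\mathbb Z_l$ for $M$ of finite type) that the paper leaves to the reader.
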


\begin{proof}
  Cela résulte du Lemme \ref{reduction_deux_points} et du Théorème \ref{jannsen-injectivite-abel-jacobi}.
\end{proof}

\subsection{Caractérisation des fermés donnant naissance à un motif pur}

  \subsubsection{Restriction de l'action galoisienne}

  \begin{prop}
    \label{scinde-apres-extension}
    Soit $0\rightarrow A\rightarrow B\rightarrow C\rightarrow 0$ une suite exacte de $\mathbb Z_l[G_k]$-modules continus, et $k'/k$ une extension galoisienne finie. Si $l$ est premier à $[k':k]$, alors la suite exacte se scinde si et seulement si elle se scinde comme suite exacte de $\mathbb Z_l[G_{k'}]$-modules.
  \end{prop}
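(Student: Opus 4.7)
Le plan est de traduire la question du scindage en termes de groupes d'extensions continues, puis d'exploiter que le groupe de Galois relatif $\Gal(k'/k)$ est de cardinal inversible dans $\mathbb Z_l$. Plus pr�cis�ment, la classe de la suite exacte d�finit un �l�ment $\xi \in \Ext^1_{\mathbb Z_l[G_k]}(C,A)$ (calcul� en cohomologie continue), et il s'agit de prouver l'injectivit� de l'application de restriction
\[ \mathrm{res} : \Ext^1_{\mathbb Z_l[G_k]}(C,A) \to \Ext^1_{\mathbb Z_l[G_{k'}]}(C,A), \]
puisque l'hypoth�se de scindage sur $G_{k'}$ �quivaut � $\mathrm{res}(\xi)=0$ (le sens direct de l'�quivalence cherch�e �tant trivial par fonctorialit�).

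Pour cette injectivit�, j'envisage deux approches classiques �quivalentes. La plus directe consiste � invoquer la formule $\mathrm{cor}\circ\mathrm{res} = [k':k]\cdot\id$, valable pour tout sous-groupe ferm� d'indice fini. Comme $[k':k]$ est premier � $l$, il est inversible dans $\mathbb Z_l$, donc agit comme un automorphisme sur le $\mathbb Z_l$-module $\Ext^1_{\mathbb Z_l[G_k]}(C,A)$ ; ainsi $\mathrm{res}(\xi)=0$ entra�ne imm�diatement $\xi=0$. Alternativement, la suite exacte des termes de bas degr� associ�e � la suite spectrale de Hochschild-Serre pour la projection $G_k\twoheadrightarrow \Gal(k'/k)$ s'�crit
\[ 0 \to \h^1(\Gal(k'/k), \Hom_{G_{k'}}(C,A)) \to \Ext^1_{\mathbb Z_l[G_k]}(C,A) \xrightarrow{\mathrm{res}} \Ext^1_{\mathbb Z_l[G_{k'}]}(C,A), \]
et le terme de gauche est nul, $\Hom_{G_{k'}}(C,A)$ �tant un $\mathbb Z_l$-module et $\Gal(k'/k)$ un groupe fini d'ordre premier � $l$.

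Le principal point � soigner me semble �tre la v�rification que ce formalisme (restriction-corestriction, suite spectrale de Hochschild-Serre, calcul des $\Ext^1$ comme classes d'extensions) s'applique correctement dans le cadre de la cohomologie continue des $\mathbb Z_l[G_k]$-modules topologiques, comme �tudi� dans \cite{jannsen_continuous}. Mais il s'agit l� d'une v�rification routini�re ; aucun obstacle conceptuel n'est anticip�.
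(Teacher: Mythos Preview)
Votre proposition est correcte et suit essentiellement la m�me d�marche que l'article : celui-ci propose lui aussi deux variantes, d'une part un argument de moyenne (�quivalent � votre $\mathrm{cor}\circ\mathrm{res}=[k':k]\cdot\id$), d'autre part la m�me suite exacte des termes de bas degr� de la suite spectrale des $\Ext$, avec annulation du $\h^1$ du groupe fini $\Gal(k'/k)$ � valeurs dans un $\mathbb Z_l$-module. Aucune diff�rence substantielle n'est � signaler.
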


  \begin{proof}
    Le sens direct est évident. Pour la réciproque, on peut soit considérer une section $G_{k'}$-invariante, et utiliser le procédé de moyenne habituel pour la rendre $G_k$-invariante (voir \cite{serre_representations-lineaires}, Chapitre 1, Théorème 1) soit invoquer la suite spectrale des $\Ext$ :
    \[ \h^p(\Gal(k'/k),\Ext_{G_{k'}}^q(C,A))\Longrightarrow\Ext_{G_k}^q(C,A)\]
    dont la suite des termes de bas degré s'écrit : 
    \[0\rightarrow  \h^1(\Gal(k'/k),\Hom_{G_{k'}}(C,A)\rightarrow \Ext_{G_k}^1(C,A) \rightarrow \Ext_{G_{k'}}^1(C,A)^{\Gal(k'/k)}\rightarrow \cdots\] 
    Or l'hypothèse sur $l$ fait que la catégorie des $\mathbb Z_l[\Gal(k'/k)]$-modules est semi-simple (c'est-à-dire toute suite exacte s'y scinde), d'où l'annulation du premier terme.
  \end{proof}

  \subsubsection{Points rationnels du foncteur de Picard}
 
  Soient $N$ un entier naturel plus grand que $1$, et $\BPic^N_{X/k}$ la composante connexe du foncteur de Picard représentant les (familles de) classes d'isomorphisme de faisceaux inversibles de degré $N$. Ce foncteur est représentable, et c'est un torseur \emph{fppf} sur $\Spec k$ de groupe structurel $\BPic^0_{X/k}$ (voir \cite{kleiman_picard-scheme},9.6.21). En fait, comme ici $X$ a des points rationnels sur des extensions finies séparables de $k$, c'est déjà un torseur pour la topologie étale sur $\Spec k$. On dispose d'un morphisme canonique $X\rightarrow \BPic^1_{X/k}$ correspondant au faisceau inversible sur $X\times X$ associé au diviseur de Cartier donné par la diagonale $\Delta:X\rightarrow X\times_k X$.

  \begin{thm}
    \label{theoreme-caracterisation-fermes-engendrant-motis-purs}
    Soient  $k'/k$  une extension galoisienne finie telle que pour tout $ z \in |Z|$, $k(z)\subset k'$, et $l$ un nombre premier ne divisant par $[k':k]$.
  La suite exacte
	\[  0\ \rightarrow \h^{1}(X_{\overline k},\mathbb Z_l(1))\rightarrow \h^{1}(U_{\overline k},\mathbb Z_l(1))\rightarrow \h^{2}_{Z_{\overline k}}(X_{\overline k},\mathbb Z_l(1))_0 \rightarrow 0\]
se scinde si et seulement s'il existe un entier naturel $N\geq 1$ premier à $l$ et un point rationnel $p:\Spec k\rightarrow \BPic^N_{X/k}$ tel que $Z=X\backslash U$ soit inclus dans la fibre de $X\rightarrow \BPic^1_{X/k}\xrightarrow{\times N} \BPic^N_{X/k}$ en $p$.  \end{thm}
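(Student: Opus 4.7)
The plan is to reduce to the case of a totally decomposed divisor already handled in Corollaire \ref{corrolaire-totalement-decompose} by base change to $k'$, and then to translate the resulting torsion condition on $\Pic^0(X_{k'})$ into the Picard-theoretic condition of the statement by Galois descent. Denote by $f\colon X\to \BPic^N_{X/k}$ the composition $X\to \BPic^1_{X/k}\xrightarrow{\times N}\BPic^N_{X/k}$. Since $l\nmid [k':k]$, Proposition \ref{scinde-apres-extension} shows the exact sequence splits over $\mathbb Z_l[G_k]$ if and only if it splits over $\mathbb Z_l[G_{k'}]$. As every point of $Z$ is rational over $k'$, Corollaire \ref{corrolaire-totalement-decompose} then reformulates splitting as
\[(\star)\quad \text{for all } z,z'\in|Z_{k'}|,\ z'-z \text{ is torsion of order prime to } l \text{ in } \Pic^0(X_{k'}).\]

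Sufficiency is almost immediate: given $N$ prime to $l$ and $p\in\BPic^N_{X/k}(k)$ with $Z\subset f^{-1}(p)$, pulling back to $k'$ yields $\mathcal O_{X_{k'}}(Nz)=p_{k'}$ in $\BPic^N_{X/k'}(k')$ for every $z\in|Z_{k'}|$, whence $N(z'-z)=0$ in $\Pic^0(X_{k'})$ for all $z,z'$, establishing $(\star)$.

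For the converse, assume $(\star)$. Since $|Z_{k'}|$ is finite, I can pick a single $N\geq 1$ prime to $l$ such that $N(z-z_0)=0$ in $\Pic^0(X_{k'})$ for a fixed $z_0\in|Z_{k'}|$ and every $z\in|Z_{k'}|$; equivalently, $f$ sends each $\bar k$-point of $Z$ to the same geometric point $p_{\bar k}=[\mathcal O_{X_{\bar k}}(Nz_0)]$ of $\BPic^N_{X/k}$. As $Z$ is defined over $k$, the $G_k$-action permutes $Z(\bar k)$, and the $G_k$-equivariance of $f$ forces $p_{\bar k}$ to be $G_k$-invariant; since $\BPic^N_{X/k}$ is a finite-type $k$-scheme and $k$ has characteristic zero, $p_{\bar k}$ descends to a unique $p\in\BPic^N_{X/k}(k)$, and the inclusion $Z\subset f^{-1}(p)$ then holds on $\bar k$-points by construction, hence set-theoretically.

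The delicate step is this last Galois descent: one must both extract a \emph{single} integer $N$ prime to $l$ from the pairwise torsion statement (which uses the finiteness of $|Z|$) and verify Galois invariance of the common target in $\BPic^N_{X/k}$, which crucially relies on $Z$ being defined over $k$ rather than merely over $k'$. The reduction to $k'$ itself also rests on the semisimplicity of $\mathbb Z_l[\Gal(k'/k)]$ ensured by the coprimality hypothesis on $l$.
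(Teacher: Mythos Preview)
Your proof is correct and follows essentially the same strategy as the paper: reduce to the totally decomposed case over $k'$ via Proposition~\ref{scinde-apres-extension}, apply Corollaire~\ref{corrolaire-totalement-decompose}, and then perform a Galois descent on the common image in $\BPic^N_{X/k}$. The paper isolates the descent step as a separate lemma (Lemme~\ref{lemme-point-rationnel-PicN}) phrased in terms of the $\BPic^0_{X/k}[N]$-torsor $\BPic^1_{X/k}\to\BPic^N_{X/k}$ and argues $\Gal(k'/k)$-equivariance on $k'$-points, whereas you argue directly that the common geometric target is $G_k$-fixed and hence descends; these are cosmetic variants of the same argument.
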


\begin{proof}
  Cela résulte du Corollaire \ref{corrolaire-totalement-decompose}, de la Proposition \ref{scinde-apres-extension}, et du lemme suivant :
  \begin{lem}
    \label{lemme-point-rationnel-PicN}
    Les notations sont celles du Théorème \ref{theoreme-caracterisation-fermes-engendrant-motis-purs}.
   Soit $N\geq 1$ un entier naturel. Le fermé $Z=X\backslash U$ est inclus dans une fibre de $X\rightarrow \BPic^1_{X/k}\rightarrow \BPic^N_{X/k}$ en un point rationnel $p:\Spec k\rightarrow \BPic^N_{X/k}$ si et seulement si
   l'image de $Z(k')$ dans $\BPic^1_{X/k}(k')$ est incluse dans une orbite sous $\BPic^0_{X/k}[N](k')$.

  \end{lem}
  \begin{proof}
    Ceci découle essentiellement du fait que le morphisme $\BPic^1_{X/k}\rightarrow \BPic^N_{X/k}$ est un $\BPic^0_{X/k}[N]$-torseur.
    Pour le sens direct : notons $\pi$ ce torseur,
il induit un $\BPic^0_{X/k}[N]$-torseur $\pi^{-1}(p)\rightarrow \Spec k$, et on a donc un isomorphisme naturel :
    \[ \BPic^0_{X/k}[N]\times_{\Spec k} \pi^{-1}(p) \simeq \pi^{-1}(p) \times_{\Spec k} \pi^{-1}(p) \]
    Si $k'/k$ est une extension arbitraire, un couple $(z,z')$ d'éléments de $Z(k')$, donc de $\pi^{-1}(p)(k')$ par hypothèse, définit unique élément $\alpha$ de  $\BPic^0_{X/k}[N](k')$ tel que $z'-z=\alpha$. Réciproquement, on dispose d'une application
    \[  Z(k')\rightarrow X(k') \rightarrow \BPic^1_{X/k} (k') \rightarrow \frac{\BPic^1_{X/k}(k') }{\BPic^0_{X/k}[N](k') }\rightarrow \frac{\BPic^1_{X/k}}{\BPic^0_{X/k}[N]}\left(k'\right)\]
    dont l'image est par hypothèse réduite à un point $p: \Spec k' \rightarrow \BPic^1_{X/k}/\BPic^0_{X/k}[N]$. Comme l'application en question est $\Gal(k'/k)$-équivariante, ceci montre que $k(p)=k$, et on vérifie alors aisément que $Z\rightarrow X \rightarrow \BPic^N_{X/k}$ se factorise par la fibre de $\pi$ en le $k$-point induit par $p$.
     \end{proof}

\end{proof}

\subsection{Fin de la preuve du Théorème \ref{thm_princ}}

Le Théorème \ref{thm_princ} est une conséquence de la condition suffisante d'épointage des sections $l$-adiques donnée \S \ref{cond_suffisante_epointage} et du Théorème \ref{theoreme-caracterisation-fermes-engendrant-motis-purs}.

    \subsection{Exemples de courbes dont l'homologie est pure}
    \label{exemples_homologie_pure}

    \subsubsection{Période des courbes}

    La condition donnée dans le Théorème \ref{theoreme-caracterisation-fermes-engendrant-motis-purs} implique que $[\BPic^N_{X/k}]=0$ dans $\h^1(G_k,\BPic^0_{X/k})$. D'autre part on a un isomorphisme de $\BPic^0_{X/k}$-torseurs $\BPic^N_{X/k}\simeq \left(\BPic^1_{X/k}\right)^N$. On sait par ailleurs que $[\BPic^1_{X/k}]$ est d'ordre fini dans $\h^1(G_k,\BPic^0_{X/k})$, égal à la \emph{période} de la courbe (cardinal du conoyau $\deg:\CH_0(X_{\overline k})^{G_k} \rightarrow \mathbb Z$), voir \cite{Eriksson-Scharaschkin,stix_period-index}. La condition du Théorème \ref{theoreme-caracterisation-fermes-engendrant-motis-purs} implique que $N$ est un multiple de la période de la courbe.
    \subsubsection{Intersection d'une courbe avec sa jacobienne}
 Il est a priori difficile de construire des exemples explicites, comme le montre le théorème suivant :
\begin{thm}[Raynaud 1983, conjecture de Manin-Mumford]
Soit $k$ un corps de nombres, $X/k$ une courbe propre lisse de genre $g\geq2$.  
    On fixe un $k$-plongement $X\hookrightarrow \BPic^0_{X/k}$. 
    Alors l'ensemble $X(\overline{k})\cap \BPic^0_{X/k}(\overline{k})_{tors}$ est fini.
  \end{thm}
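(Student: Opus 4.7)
The plan is to deduce the finiteness from the more general Manin--Mumford theorem for arbitrary subvarieties of abelian varieties, and then to sketch Raynaud's original $p$-adic proof of the latter. After possibly enlarging $k$, the given embedding $X \hookrightarrow \BPic^0_{X/k} = J$ is fixed. The general Manin--Mumford statement asserts that any geometrically irreducible closed subvariety $Y \subset A$ of an abelian variety $A/k$ whose torsion points are Zariski-dense in $Y$ is a translate by a torsion point of an abelian subvariety of $A$. Applied with $Y = X \subset J$: the only $1$-dimensional translates of abelian subvarieties of $J$ are translates of elliptic curves, which would force $X$ to have genus $1$; since $X$ has genus $g \geq 2$, this is excluded, so $X(\overline{k}) \cap J_{\mathrm{tors}}$ is not Zariski-dense in $X$, and being contained in a curve it must be finite.

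For the general Manin--Mumford statement, I would follow Raynaud and choose a prime $\mathfrak p \subset \mathcal O_k$ of residue characteristic $p$ at which $A$ has good reduction, decomposing the torsion as $A_{\mathrm{tors}} = \bigoplus_{\ell \neq p} A[\ell^\infty] \oplus A[p^\infty]$. For the prime-to-$p$ torsion, Serre--Tate furnishes an isomorphism $A[n](\overline{k}) \simeq A_{\overline{\mathbb F}}[n]$ whenever $(n,p)=1$; combining this with the constraint that the eigenvalues of $\mathrm{Frob}_{\mathfrak p}$ on $T_\ell A$ are $q$-Weil numbers (of absolute value $\sqrt q$), one shows that a Zariski-dense set of prime-to-$p$ torsion inside the Galois-stable subvariety $Y$ forces $Y$ to be preserved by enough non-trivial endomorphisms of $A$ (of the form $y \mapsto \sigma(y)$) to collapse $Y$ to a coset of an abelian subvariety. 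This is the rigidity argument of Hrushovski--Pink--R\"ossler type, carried out arithmetically using Chebotarev.

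The main obstacle is the $p$-primary case, where reduction $\bmod\,\mathfrak p$ is no longer injective on torsion. Here I would pass to the formal completion of the N\'eron model at $\mathfrak p$, study the formal group $\hat A$ and the formal subscheme $\hat Y \subset \hat A$, and use the Hodge--Tate decomposition of the $p$-adic Tate module of $A$ to analyze the intersection $\hat Y \cap \hat A[p^\infty]$. The heart of Raynaud's argument is to show that Zariski-density of this intersection forces $\hat Y$ to contain a non-trivial formal subgroup, which then algebraizes to yield the coset structure. Patching the prime-to-$p$ and $p$-primary analyses and inducting on the dimension of $A$ completes the proof of Manin--Mumford, and combined with the reduction in the first paragraph, the finiteness of $X(\overline{k}) \cap \BPic^0_{X/k}(\overline{k})_{\mathrm{tors}}$ follows.
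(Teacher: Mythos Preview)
The paper does not prove this theorem at all: it is stated with attribution (``Raynaud 1983, conjecture de Manin--Mumford'') and invoked only to explain why explicit examples satisfying the hypothesis of Th\'eor\`eme~\ref{theoreme-caracterisation-fermes-engendrant-motis-purs} are hard to produce. There is therefore no ``paper's own proof'' to compare against; your proposal is supplying a proof where the authors intended only a citation.

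That said, your reduction in the first paragraph is correct and is the standard way to deduce the curve case from the general statement: a curve of genus $g\geq 2$ cannot be a torsion translate of an abelian subvariety of its Jacobian, so the torsion points on $X$ are not Zariski-dense, hence finite. Note, however, that historically Raynaud proved the curve case first (\emph{Courbes sur une vari\'et\'e ab\'elienne et points de torsion}, Invent.\ Math.\ 1983) and only afterwards the general subvariety case, so your deduction runs backwards relative to the original literature.

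Your sketch of the general proof conflates several distinct approaches. Raynaud's original argument does split into prime-to-$p$ and $p$-primary parts and does use $p$-adic formal methods for the latter, so that skeleton is right. But the ``rigidity argument of Hrushovski--Pink--R\"ossler type'' you invoke for the prime-to-$p$ part is anachronistic: Hrushovski's model-theoretic proof (2001) and the Pink--R\"ossler algebraic proof (2002) came almost two decades after Raynaud and proceed quite differently. If you intend to present Raynaud's proof, the prime-to-$p$ step should rather use the Galois action directly together with a bound on the order coming from Weil's Riemann hypothesis for abelian varieties. As it stands, the sketch is too impressionistic to count as a proof, and the claimed attribution to Raynaud is not accurate for the method described.
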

  Il en résulte que si le $k$-plongement $X\hookrightarrow \BPic^0_{X/k}$ est associé à un point $z \in Z(k)$, l'ensemble $Z(\overline{k})$ devra, pour satisfaire à l'hypothèse du Théorème \ref{theoreme-caracterisation-fermes-engendrant-motis-purs}, être contenu dans l'ensemble fini $X(\overline{k})\cap \BPic^0_{X/k}(\overline{k})_{tors}$.
  
   \subsubsection{Courbes modulaires}

   Cependant les courbes modulaires sont des exemples naturels, au vu du théorème : 
\begin{thm}[Manin-Drinfeld 1972]
  Soit $\Gamma \subset \SL(2,\mathbb Z)$ un sous-groupe de congruence, $X_\Gamma$ la courbe modulaire associée. 
  La classe d'un diviseur de degré $0$ \emph{à support dans les pointes} est un élément de torsion de $\Pic^0(X_\Gamma)$. 
\end{thm}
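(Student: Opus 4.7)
Je suivrais l'approche de Drinfeld, qui exploite l'action de l'alg�bre de Hecke $\mathbb T \subset \operatorname{End}(\BPic^0_{X_\Gamma})$ engendr�e par les op�rateurs $T_p$ pour $p$ premier ne divisant pas le niveau de $\Gamma$. L'id�e directrice est de produire, pour $p$ convenable, un op�rateur $T_p - (1+p)\cdot\id$ qui annule modulo torsion toute classe de diviseur � support dans les pointes, tout en �tant une isog�nie sur toute la jacobienne.

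La premi�re �tape, qui est le c\oe ur technique de la preuve, consiste � v�rifier que pour tout $p$ premier au niveau et tout diviseur $D$ de degr� z�ro � support dans les pointes, on a l'�galit� $T_p \cdot [D] = (1+p) \cdot [D]$ dans $\Pic^0(X_\Gamma)$. Cette identit� de nature \og{}Eisenstein\fg{} traduit le fait que les pointes param�trent les d�g�n�rescences modulaires (polygones de N�ron), et que la correspondance de Hecke $C_p$, de degr� $1+p$ au-dessus de $X_\Gamma$, agit sur les classes cuspidales par la multiplication par son degr� modulo diviseurs principaux. On peut s'en convaincre soit via l'interpr�tation modulaire pr�cise de la correspondance de Hecke, soit par un calcul explicite de l'action de $T_p$ sur les termes constants des $q$-d�veloppements aux pointes des s�ries d'Eisenstein de poids $2$.

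Dans un second temps, j'utiliserais l'isomorphisme d'Eichler--Shimura, qui identifie $\BPic^0_{X_\Gamma}(\mathbb C)$, en tant que $\mathbb T$-module, � $S_2(\Gamma)^\vee / \h_1(X_\Gamma,\mathbb Z)$, o� $S_2(\Gamma)$ d�signe l'espace des formes paraboliques de poids $2$ pour $\Gamma$. La borne de Ramanujan--Petersson (d�montr�e en poids $2$ par Eichler) assure que toute valeur propre $a_p(f)$ d'une forme parabolique propre sous $T_p$ v�rifie $|a_p(f)| \leq 2\sqrt{p} < 1+p$. Par cons�quent, $T_p - (1+p)\cdot \id$ est injectif sur $S_2(\Gamma)\otimes \mathbb Q$, et donc l'endomorphisme qu'il d�finit sur $\BPic^0_{X_\Gamma}$ est une isog�nie. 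Combin�e � la premi�re �tape, cette observation force tout diviseur $D$ de degr� z�ro � support dans les pointes � �tre de torsion dans $\Pic^0(X_\Gamma)$, puisqu'il se trouve dans le noyau fini d'une isog�nie.

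L'obstacle principal se trouve dans la premi�re �tape : v�rifier proprement que $T_p$ agit comme la multiplication par $(1+p)$ sur les classes cuspidales modulo torsion n'est pas imm�diat, car $T_p$ en tant que correspondance envoie a priori une pointe sur une somme formelle de pointes avec multiplicit�s, et il faut s'assurer que la projection dans le groupe de Picard vaut bien $(1+p)$ fois la classe initiale modulo diviseurs principaux. La deuxi�me �tape, quoique reposant en toute g�n�ralit� sur un r�sultat profond, peut elle �tre all�g�e : il suffit en fait d'observer qu'aucune forme parabolique de poids $2$ ne peut admettre la valeur propre $1+p$ pour tout $p$, sans quoi elle serait elle-m�me une s�rie d'Eisenstein, ce qui est absurde.
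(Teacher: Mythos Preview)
L'article ne d\'emontre pas ce th\'eor\`eme : il est \'enonc\'e comme r\'esultat classique (Manin--Drinfeld 1972) et sert uniquement \`a fournir des exemples explicites de courbes affines dont l'homologie $l$-adique est pure. L'article signale toutefois en passant une d\'emonstration alternative, due \`a Elkik, fond\'ee sur la puret\'e de $\h_1(U_{\mathbb C},\mathbb Q)$ comme structure de Hodge mixte --- approche th\'ematiquement voisine de l'esprit de l'article, mais enti\`erement distincte de la v\^otre.

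Votre esquisse reproduit fid\`element l'argument originel de Drinfeld et est globalement correcte. Deux pr\'ecisions cependant. Premi\`erement, pour un sous-groupe de congruence $\Gamma$ arbitraire, l'identit\'e $T_p\cdot[D]=(1+p)[D]$ n'est pas tout \`a fait exacte telle quelle : les op\'erateurs diamant $\langle p\rangle$ interviennent dans la relation d'Eisenstein sur les classes cuspidales, et la formulation la plus propre consiste \`a se restreindre aux premiers $p\equiv 1\pmod N$ (o\`u $N$ est le niveau de $\Gamma$), ce qui rend $\langle p\rangle$ trivial et valide votre formule. Deuxi\`emement, votre remarque finale --- \'eviter Ramanujan--Petersson en observant qu'aucune forme parabolique n'a $a_p=1+p$ pour \emph{tout} $p$ --- ne fournit pas d'isog\'enie \`a partir d'un \emph{seul} op\'erateur $T_p-(1+p)$ : il faut alors consid\'erer un produit fini $\prod_i\bigl(T_{p_i}-(1+p_i)\bigr)$ adapt\'e aux formes propres pr\'esentes, ce qui fonctionne mais demande un mot suppl\'ementaire. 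La voie que vous indiquez en premier, via la borne de Hasse--Weil en poids~$2$, reste la plus directe.
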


On peut considérer par exemple $X_1(p)$, avec  $p\geq 3$ premier,  qui est définie sur $\mathbb Q$. Il y a exactement $\frac{p-1}{2}$ pointes rationnelles et une pointe définie sur $\mathbb Q(\zeta_p)^+$, le sous-corps réel maximal du corps cyclotomique $\mathbb Q(\zeta_p)$.

Un autre exemple est donné par $X_{split}(p)$, avec  $p\geq 3$ premier, qui est également définie sur $\mathbb Q$, et a une seule pointe rationnelle  et $\frac{p-1}{2}$ pointes définies sur $\mathbb Q(\zeta_p)^+$ (voir \cite{Bilu-Parent_Serre}).

Dans les deux cas, si l'on prend pour $U$ l'ouvert  complémentaire  des pointes, le Théorème \ref{theoreme-caracterisation-fermes-engendrant-motis-purs} s'applique, et donc $\h_1(U_{\overline{k}},\mathbb Z_l)$ est \og{}pure\fg{} pour presque tout nombre premier $l$. Dans le premier cas au moins, on peut préciser l'ensemble des $l$ convenables, car le cardinal du groupe des classes d'idéaux à support dans les pointes a été calculé pour $X_1(p)$ (voir \cite{Takagi_cuspidal-class-number-formula}). Nous ignorons si c'est aussi le cas pour $X_{split}(p)$. La présence de pointes non rationnelles fait que le théorème originel de Jannsen (voir \cite{jannsen_mixed-motives-K-theory}) ne s'applique pas. Toutefois si l'on se restreint aux courbes modulaires, la démarche adoptée ici est un peu artificielle, car elle se fonde sur le théorème de Manin-Drinfeld, dont une démonstration repose précisément sur la pureté de $\h_1(U_{\mathbb C},\mathbb Q)$ comme structure de Hodge (voir \cite{Elkik_Manin-Drinfeld}). Il semble donc fort probable qu'une démonstration directe de la pureté de $\h_1(U_{\overline{k}},\mathbb Z_l)$ comme représentation $l$-adique existe.

Pour en revenir au problème d'épointage des sections, on peut montrer facilement dans ces deux exemples que $\pi_1^{[ab,l]}(U,\overline x) \rightarrow G_k$ admet une section. On n'a pas besoin du Théorème \ref{harari-szamuely}, mais la présence d'une pointe rationnelle permet de montrer directement que $\pi_1(U,\overline x) \rightarrow G_k$ admet une section (voir par exemple \cite{stix_cuspidal}, Proposition 8). On conclut donc pour ces deux exemples que la seconde condition du Théorème \ref{thm_princ} est également vérifiée, et donc que la conclusion du théorème est vraie pour ces courbes.

\appendix
\section{Paquets de points rationnels des champs des racines}
\label{app_paquets}

\label{paquets}
\begin{prop}
   \label{paquets_prop}
  Soit $X$ un schéma, $D$ un diviseur de Cartier effectif et réduit, $r$ un entier, $\sqrt[r]{D/X}=\mathcal X$ le champ des racines associé, $\pi :\mathcal X\rightarrow X$ le morphisme canonique vers l'espace des modules. Alors toute trivialisation du fibré normal $\mathcal N_D\simeq \mathcal O_D$ induit une bijection entre classes d'isomorphisme de sections de $\pi_D:\pi^{-1}D\rightarrow D$ et $H^1(D,\boldsymbol{\mu}_r)$.     
\end{prop}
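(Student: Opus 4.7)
La d\'emarche consiste \`a d\'eplier la d\'efinition du champ des racines pour obtenir une description modulaire explicite de $\pi^{-1}(D)$, et \`a conclure par la th\'eorie de Kummer sur $D$.

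Par fonctorialit\'e du produit fibr\'e, on a
\[ \pi^{-1}(D) = D\times_X \mathcal X = D\times_{[\mathbb A^1|\mathbb G_m],\,\cdot^{\otimes r}}[\mathbb A^1|\mathbb G_m], \]
le morphisme $D\to[\mathbb A^1|\mathbb G_m]$ \'etant donn\'e par le couple $(\mathcal N_D, 0)$, puisque la section canonique $s_D$ s'annule sur $D$. En traduisant ce produit fibr\'e en termes modulaires \`a la Vistoli, $\pi^{-1}(D)$ classifie, sur un $D$-sch\'ema $T$, les triplets $(\mathcal M,t,\phi)$ o\`u $\mathcal M$ est un faisceau inversible sur $T$, $t$ une section de $\mathcal M$, et $\phi:\mathcal M^{\otimes r}\simeq \mathcal N_D|_T$ un isomorphisme v\'erifiant $\phi(t^{\otimes r})=0$. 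Une section de $\pi_D$ revient alors \`a la donn\'ee d'un tel triplet pour $T=D$, \`a isomorphisme pr\`es.

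Ensuite, la trivialisation $\mathcal N_D\simeq\mathcal O_D$ permet de r\'ecrire $\phi$ comme une trivialisation $\phi:\mathcal M^{\otimes r}\simeq \mathcal O_D$, et transforme la condition $\phi(t^{\otimes r})=0$ en une \'equation dans $\mathcal O_D$. Comme $D$ est r\'eduit, un calcul local dans une trivialisation de $\mathcal M$ montre que $t^{\otimes r}=0$ impose $t=0$. Les sections se ram\`enent donc aux couples $(\mathcal M,\phi)$, o\`u $\mathcal M$ est un faisceau inversible sur $D$ et $\phi:\mathcal M^{\otimes r}\simeq \mathcal O_D$ une trivialisation. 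Par la th\'eorie de Kummer, ces couples \`a isomorphisme pr\`es correspondent aux $\boldsymbol{\mu}_r$-torseurs sur $D$, classifi\'es par $H^1(D,\boldsymbol{\mu}_r)$, le couple trivial $(\mathcal O_D,\mathrm{id})$ fournissant le point base naturel de la bijection.

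L'\'etape la plus d\'elicate me para\^it \^etre l'identification modulaire de $\pi^{-1}(D)$ et la description explicite de la section canonique associ\'ee \`a la trivialisation du fibr\'e normal ; une fois celle-ci acquise, l'annulation de $t$ r\'esultant de la r\'eduction de $D$ et l'identification de Kummer sont purement formelles.
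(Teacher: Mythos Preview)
Your argument is correct and follows essentially the same route as the paper: both compute $\pi^{-1}D$ as the fibre product over $[\mathbb A^1/\mathbb G_m]$ using that $(\mathcal O_X(D),s_D)|_D=(\mathcal N_D,0)$, then use the reducedness of $D$ to kill the auxiliary section and land on $\boldsymbol{\mu}_r$-torsors. The only cosmetic difference is that the paper packages this as an isomorphism $\pi^{-1}D\simeq[\BoldSpec(\mathcal O_D[y]/y^r)/\boldsymbol{\mu}_r]$ and the passage to its reduced substack $B_D\boldsymbol{\mu}_r$, whereas you unfold the same content in modular language via triples $(\mathcal M,t,\phi)$.
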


\begin{proof}
 
 \ 
 Le choix de $\mathcal N_D\simeq \mathcal O_D$ permet de rendre $2$-cartésienne la face du bas du diagramme 
\begin{center}
\xymatrix@R=2pt{
&&\mathcal X \ar[rr]\ar[dddd] && [\mathbb A^1/\mathbb G_m]\ar[dddd]\\
&&&&\\
&\pi^{-1}D\ar[rr]\ar[dddd]\ar[ruu]&& \left[\Spec \frac{\mathbb Z[x,y]}{y^r-x}/ \boldsymbol{\mu}_r\right] \ar[dddd]\ar[ruu]&\\
&&&&\\
&& X \ar[rr] &&[\mathbb A^1/\mathbb G_m]\\
&&&&\\
&D\ar[rr]\ar[ruu]&& \mathbb A^1\ar[ruu]&\\
&&&&&&&\\
}
\end{center}
où le morphisme $D\rightarrow \mathbb A^1$ correspond à $0\in H^0(D,\mathcal O_D)$. On en déduit un isomorphisme de $D$-champs : $\pi^{-1}D\simeq \left[\BoldSpec (\mathcal O_D[y]/y^r) /\boldsymbol{\mu}_r\right] $. Or $D$ étant réduit, toute section se factorise de manière unique par $\left[\BoldSpec (\mathcal O_D[y]/y^r) /\boldsymbol{\mu}_r\right]_{red}\simeq B_D  \boldsymbol{\mu}_r$.

\end{proof}

\begin{cor}
  \label{paquets_cor}
  Soit $\mathcal X =\sqrt[\bold r]{\bold D/X}$ une orbicourbe sur un corps $k$, où $X$ est une courbe sur $k$, et $(D_i)_{i\in I}$ est une famille finie de diviseurs de Cartier effectifs et réduits sur $X$. Alors le morphisme $\mathcal X \rightarrow X$ induit un épimorphisme sur les (classes d'isomorphismes) de points rationnels. Si $x\in X(k)$ est dans le support de $D_i$, le choix d'un vecteur tangent en $x$ induit une bijection entre classes d'isomorphismes de $\pi^{-1}(x)(k)$ et $k^*/{k^{*}}^r$.

\end{cor}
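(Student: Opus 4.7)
The plan is to reduce to the case of one root stack above one rational point, and there to specialise the proof of Proposition \ref{paquets_prop} to the fibre above that point.

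First, writing $\mathcal X=\prod_{X,\,i\in I}\sqrt[r_i]{D_i/X}$, for any $x\in X(k)$ the fibre $\pi^{-1}(x)$ is the fibre product over $\Spec k$ of the fibres of the individual root stacks. For every index $i$ such that $x\notin|D_i|$ the morphism $\sqrt[r_i]{D_i/X}\to X$ is an isomorphism near $x$, so the corresponding factor equals $\Spec k$. It therefore suffices to treat a single root stack $\sqrt[r]{D/X}$ with $D$ a reduced Cartier divisor and $x$ a rational point in the support of $D$; the epimorphism statement then reduces to showing that $\pi^{-1}(x)(k)$ is non-empty.

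Next, I would specialise the cartesian-diagram argument of Proposition \ref{paquets_prop} to the fibre above $x$. Since $D$ is reduced and $X$ is a smooth curve, one has a canonical identification $\mathcal N_{D,x}\simeq T_xX$, so a non-zero tangent vector at $x$ provides a trivialization of $\mathcal N_D$ above $\{x\}$. Base-changing the cartesian diagram of the Proposition along $\{x\}=\Spec k\hookrightarrow D$ then produces an isomorphism of $k$-stacks
\[\pi^{-1}(x)\simeq\bigl[\Spec(k[y]/y^r)/\boldsymbol{\mu}_r\bigr],\]
and since $\Spec k$ is reduced, every $k$-point factors uniquely through the reduction, which is the classifying stack $B_{\Spec k}\boldsymbol{\mu}_r$.

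Finally, isomorphism classes of $k$-points of $B_{\Spec k}\boldsymbol{\mu}_r$ classify $\boldsymbol{\mu}_r$-torsors on $\Spec k$, i.e.\ $H^1(\Spec k,\boldsymbol{\mu}_r)$, which by Kummer theory equals $k^*/{k^*}^r$; this is the asserted bijection, and the neutral class produces a canonical lift of $x$, whence the surjectivity statement. The only point I expect to require care is checking that the resulting bijection depends only on the chosen tangent vector, which comes down to the canonical nature of the identification $\mathcal N_{D,x}\simeq T_xX$ for a reduced Cartier divisor on a smooth curve.
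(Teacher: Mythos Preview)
Your proof is correct and follows essentially the same approach as the paper: the paper's own argument is a two-line sketch observing that on a curve the normal bundle of a reduced divisor identifies with the (restriction of the) tangent bundle and is therefore trivial, so the result follows from Proposition~\ref{paquets_prop} together with Kummer theory. You have simply unpacked these steps, adding the explicit reduction from several $D_i$ to one and the surjectivity via the neutral $\boldsymbol{\mu}_r$-torsor.
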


\begin{proof}
  Pour une courbe, le fibré normal s'identifie au fibré tangent, et est trivial. Le Corollaire \ref{paquets_cor}  résulte donc immédiatement de la Proposition \ref{paquets_prop} et de la théorie de Kummer.
\end{proof}

\bibliography{biblio_section}
\end{document}